\newtheorem{theorem}{Theorem}[section]
\newtheorem{conclusion}[theorem]{Conclusion}
\newtheorem{corollary}[theorem]{Corollary}
\newtheorem{definition}[theorem]{Definition}
\newtheorem{example}[theorem]{Example}
\newtheorem{lemma}[theorem]{Lemma}
\newtheorem{proposition}[theorem]{Proposition}
\newenvironment{remark}{\vspace{4pt}\noindent\textbf{Remark.}}{\vspace{4pt}}
\newcommand{\IC}{\ensuremath{\mathbb{C}}}
\newcommand{\IN}{\ensuremath{\mathbb{N}}}
\newcommand{\IP}{\ensuremath{\mathbb{P}}}
\newcommand{\IZ}{\ensuremath{\mathbb{Z}}}
\newcommand{\cM}{\ensuremath{\mathcal{M}}}
\DeclareMathOperator{\Hom}{Hom}
\DeclareMathOperator{\id}{id}
\newcommand{\st}{\ensuremath{\;\mathrm{s.t.}\;}}
\numberwithin{equation}{section}
\begin{document}
\title[Order $6$ automorphisms]{Order $6$ non-symplectic automorphisms\\ of K3 surfaces.}
\author{Jimmy Dillies}
\address{University of Utah\\
Department of Mathematics\\
Salt Lake City, UT\\} 
\email{dillies@math.utah.edu}

\thanks{The author would like to thank Ron Donagi for his interest and 
support, Shigeyuki Kond\={o} for pointing out the discrepancy between 
section 
\ref{sec:pts} and the main theorem, in an earlier version of this paper and, Alessandra Sarti for many useful comments. }


\begin{abstract}
We classify primitive non-symplectic automorphisms of order $6$ on K3 surfaces. We show how their study can be reduced to the study of non-symplectic automorphisms of order $3$ and to a local analysis of the fixed loci. In particular, we determine the possible fixed loci and show that when the Picard lattice is fixed, K3 surfaces come in mirror pairs. 
\end{abstract}

\subjclass[2000]{Primary 14J28; Secondary 14J50, 14J10}

\keywords{K3 surfaces, non-symplectic automorphism}

\maketitle

\setcounter{tocdepth}{1} {\scriptsize \tableofcontents}


\section{Introduction}
\label{sec:intro}

An automorphism of a K3 surface is called non-symplectic when the induced action on the holomorphic $2$-form is non-trivial. The study of non-symplectic automorphisms was pioneered by Nikulin \cite{Ni81} who analyzed the case of involutions. 
Since then, these automorphisms have been extensively studied by several authors. Let us mention Vorontsov \cite{Vo83}, Kond\={o} \cite{Kon86,Kon92}, Xiao Gang \cite{Xi96}, Machida and Oguiso \cite{MO98}, Oguiso and Zhang \cite{OZ98,OZ00}, Zhang \cite{Zh07}, Artebani and Sarti \cite{AS08} and, Artebani, Sarti and Taki \cite{AST09}. \\
From these works, we now know that if a K3 surface admits a non-symplectic automorphism, then the surface is algebraic and the Euler totient function evaluated at the order of the automorphism is at most $66$. Moreover, non-symplectic automorphisms of prime order have been classified, a synthetic classification can be found in \cite{AST09}, and some authors have started to investigate the simulatenous existence of symplectic and non-symplectic automorphisms \cite{Fran09}.\\
One of the reasons behind the interest in non-symplectic involutions is the mirror construction of   Borcea \cite{Bor97} and Voisin \cite{Voi93}. They construct Calabi-Yau and an explicit mirror map using, as building blocks, K3 surfaces, with non-symplectic involutions, and elliptic curves. This construction can be extended to K3 surfaces with non-symplectic automorphisms of order 3, 4 and 6 \cite{Dil06}. 

In this paper, using the classification of non-symplectic automorphisms of order $3$ \cite{AS08}, we perform a local analysis of the action on the fixed locus and classify automorphisms of order $6$.


\
\section{Plan}
\label{sec:plan}

In Section \ref{sec:notation}, we define primitive non-symplectic automorphisms and fix the notation for the rest of the paper. In Section \ref{sec:res}, we give the final classification of all possible $\zeta$'s and their fixed locus. In Section \ref{sec:fix}, we show that the fixed locus of a primitive non-symplectic automorphism of order $6$ consists in a disjoint union of points, rational curves, and possibly one genus one curve. In Section \ref{sec:tri}, we separate our cases in $3$ families which are then analayzed in Sections \ref{sec:g1}, \ref{sec:ell} and \ref{sec:pts}. In Section \ref{sec:mod}, we discuss the moduli spaces of these families, and in the last Section, \ref{sec:pic}, we focus on the special case where $\zeta$ fixes the Picard lattice.


\section{Notation}
\label{sec:notation}

Let $X$ be a smooth projective K3 surface and $\zeta$ an automorphism of $X$. The induced action of $\zeta$ on $H(X,\Omega^2) \simeq \IC$ gives rise to a character $\chi$. An automorphism is called \emph{symplectic}, if it lies in the kernel of $\chi$, and \emph{non-symplectic} otherwise. If the order of $\zeta$ and  $\chi(\zeta)$ agree, then $\zeta$ is called \emph{primitive}. In the rest of the article, $\zeta$ will be a primitive non-symplectic automorphism of order $6$ acting on $X$. 

As suggested by Cartan \cite{Car57}, given a fixed point $P$ of $\zeta$, we can linearize the action around it. Since $\zeta$ is of order $6$ and primitive, the linearized action can be written as 
\[\begin{pmatrix}
\xi_6^k & 0\\
0 & \xi_6^{k'}
\end{pmatrix}\]
where $(k,k')\in \left\{(0,1);(2,5);(3,4) \right\}$ and $\xi_6$ is a primitive $6^\mathrm{th}$ root of unity. While the first case corresponds to $P$ lying on a fixed smooth curve, the last two options correspond to $P$ being isolated. We will use the standard notation and say that $P$ is of type $\frac{1}{6}(k,k')$.  
Since $\zeta$ is primitive, its iterates will also be non-symplectic. We will denote their fixed locus by $X^{[i]}=\{x\in X \st \zeta^i x=x\}$. The components of the $X^{[i]}$ will be described by the following variables:

\begin{itemize}
\item $p_{\frac{1}{n}(k,k')}$ : number of isolated fixed points of type $\frac{1}{n}(k,k')$ in $X^{\left[\frac{6}{n}\right]}$, for $n\in\{6,3,2\}$.
\item $l^{[i]}$ : number of rational curves in $X^{[i]}$.
\item $g^{[i]}$ : maximal genus among the curves in $X^{[i]}$.
\item $g_M=\mathrm{max}\{1,g^{[1]}\}$.
\end{itemize}

When referring to \cite{AS08}, we will use their notation, namely:
\begin{itemize}
\item $g$: highest genus of the curves in $X^{[2]}$. 
\item $n$: number of fixed points  in $X^{[2]}$ (all are of type $\frac{1}{3}(2,2)$).
\item $k$: total number of curves in $X^{[2]}$.
\end{itemize}


\section{Results}
\label{sec:res}

Our first result is a global description of the fixed locus of $\zeta$. 

\begin{theorem}
\label{thm:main.fix}
The fixed locus $X^{[1]}$ consists of one of the two following collections:  
\begin{enumerate}
\item a smooth genus $1$ curve and three isolated fixed points of type $\frac16 (2,5)$. 
\item  a disjoint union of smooth rational curves and points, $C_1 \sqcup \ldots C_l \sqcup P_1 \sqcup \ldots \sqcup P_{p_{\frac16 (3,4)} + p_{\frac16 (2,5)}}$, satisfying 
\begin{equation}
\label{eq:lhfr}
p_{\frac16 (3,4)} + 2p_{\frac16 (2,5)} - 6l^{[1]}  =6.
\end{equation}
\end{enumerate}
\end{theorem}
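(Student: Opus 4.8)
The plan is to determine $X^{[1]}$ from the holomorphic Lefschetz fixed point formula, taking as input the local structure already established in Section~\ref{sec:fix}: $X^{[1]}$ is a disjoint union of isolated points, each of type $\frac16(2,5)$ or $\frac16(3,4)$, together with finitely many smooth curves, all rational except for at most one of genus $1$. What remains is to pin down the numerical constraints relating these pieces and, when a genus-$1$ curve is present, to show that the configuration is completely rigid.

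First I would write the holomorphic Lefschetz formula for $\zeta$,
\[
\sum_{i=0}^{2}(-1)^i\operatorname{tr}\bigl(\zeta^*\mid H^i(X,\mathcal{O}_X)\bigr)=\sum_{P}\frac{1}{\det(\mathrm{Id}-d\zeta_P)}+\sum_{C}\nu(C),
\]
whose left-hand side equals $1+\bar\alpha$ with $\alpha=\chi(\zeta)$ the primitive sixth root of unity acting on $H^0(X,\Omega^2)$. An isolated point of type $\frac16(a,b)$ contributes $\bigl[(1-\xi_6^{a})(1-\xi_6^{b})\bigr]^{-1}$, so the two point types contribute $\tfrac12$ and $\tfrac14$ to the real part respectively, while a fixed curve $C$ of genus $g_C$ contributes a term $\nu(C)$ whose real part is $\tfrac12(1-g_C)+C^2$. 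The decisive simplification is adjunction on the K3 surface: every smooth rational fixed curve has $C^2=-2$ and the genus-$1$ curve has $E^2=0$, i.e. $C^2=2g_C-2$ throughout, so each curve contributes $\tfrac32(g_C-1)$ to the real part.

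Comparing real parts then collapses to a single identity. With $\operatorname{Re}(1+\bar\alpha)=\tfrac32$ it reads
\[
\sum_{C}\tfrac32(g_C-1)+\tfrac12\,p_{\frac16(2,5)}+\tfrac14\,p_{\frac16(3,4)}=\tfrac32 .
\]
Since the genus-$1$ curve contributes $0$ and each of the $l^{[1]}$ rational curves contributes $-\tfrac32$, multiplying by $4$ gives
\[
p_{\frac16(3,4)}+2p_{\frac16(2,5)}-6l^{[1]}=6 ,
\]
which is \eqref{eq:lhfr} and proves part~(2). Note that this identity holds verbatim when an elliptic curve is present, which is precisely why the relation alone cannot separate the two collections.

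The main obstacle is part~(1): showing that an elliptic fixed curve forces $l^{[1]}=0$, $p_{\frac16(3,4)}=0$ and $p_{\frac16(2,5)}=3$. Here I would use the reduction $X^{[1]}=(X^{[2]})^{\zeta}$, in which $\zeta$ acts on the fixed locus $X^{[2]}$ of the order-$3$ automorphism $\zeta^2$ as an involution (its square being $\zeta^2\mid_{X^{[2]}}=\mathrm{id}$); the fixed curves of $\zeta$ are the components of $X^{[2]}$ on which this involution is trivial, and the isolated points of $X^{[1]}$ arise as the isolated fixed points of this involution together with the $\zeta$-fixed isolated points of $\zeta^2$. An elliptic component $E\subset X^{[1]}$ is then a genus-$1$ component of $X^{[2]}$, which by the classification of order-$3$ non-symplectic automorphisms in \cite{AS08} occurs only in one configuration of $X^{[2]}$. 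Feeding that configuration back through the involution, together with the topological Lefschetz formula $e(X^{[1]})=2l^{[1]}+p_{\frac16(2,5)}+p_{\frac16(3,4)}$ and the relation above, should leave no room for additional rational curves or points of type $\frac16(3,4)$ and fix $p_{\frac16(2,5)}=3$. I expect the genuine work to lie in this last step: the numerical identities are blind to the elliptic curve, so excluding stray components requires the finer geometry of $\zeta^2$ and, quite possibly, a lattice-theoretic bound on the invariant and coinvariant sublattices of $\zeta$.
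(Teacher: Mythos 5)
Your derivation of the relation \eqref{eq:lhfr} is correct and in fact more explicit than the paper's, which simply cites the holomorphic Lefschetz formula of \cite{TT75}: the local contributions you compute ($\tfrac12$ for a point of type $\frac16(2,5)$, $\tfrac14$ for type $\frac16(3,4)$, and $\tfrac32(g_C-1)$ for a pointwise fixed curve with $C^2=2g_C-2$), together with $\operatorname{Re}(1+\bar\alpha)=\tfrac32$, are all right, and taking real parts does give $p_{\frac16(3,4)}+2p_{\frac16(2,5)}-6l^{[1]}=6$ whether or not a genus-$1$ curve is present. So part (2) is fine, modulo the structural input you assume.

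The genuine gap is in part (1), and it is twofold. First, your pivotal factual claim --- that a genus-$1$ component of $X^{[2]}$ occurs in only one configuration of the order-$3$ classification --- is false: in \cite{AS08} the fixed locus of an order-$3$ non-symplectic automorphism contains a genus-$1$ curve in several configurations, namely $(n,k)=(3,1),(4,2),(5,3),(6,4),(7,5),(8,6)$ (these appear as rows 6, 10, 13, 15, 17 of Table \ref{tab:results} and row 5 of Table \ref{tab:results2}). Second, you explicitly defer the exclusion of extra components to ``finer geometry'' or lattice-theoretic bounds, i.e.\ the decisive step is left undone. The mechanism the paper uses is geometric and needs no lattice theory: if the pointwise fixed genus-$1$ curve coexisted with a fixed rational curve or an isolated point of type $\frac16(3,4)$, then $X^{[2]}$ would contain at least two distinct curves (Lemma \ref{lem:623}), so by Theorem \ref{thm:asell} and Corollary \ref{cor:ell} the surface $X$ would be $\zeta$-elliptic with Weierstrass model $y^2=x^3+p_{12}(t)$; a fixed curve of positive genus is then a double section of that fibration (Proposition \ref{prop:hell}), which forces the induced action on the base to be trivial, whence $\zeta$ restricts to an order-$6$ automorphism of each fiber fixing its two intersection points with the double section --- impossible, since an order-$6$ automorphism of a genus-$1$ curve fixes at most one point (this is Lemma \ref{lem:msection}). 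Hence $l^{[1]}=p_{\frac16(3,4)}=0$, and \eqref{eq:lhfr} forces $p_{\frac16(2,5)}=3$. Note finally that your starting assumption that all fixed curves have genus at most $1$ is itself part of what the theorem asserts (case (2) allows only rational curves); in the paper this is Lemma \ref{lem:g2}, whose proof requires the same fibration argument plus a separate analysis of the genus-$2$ double-plane case $Y^2=F_6(X_0,X_1)+F_3(X_0,X_1)X_2^3+fX_2^6$, so it cannot simply be taken as input without argument.
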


The proof of this Theorem follows from the lemmata and propositions of section \ref{sec:fix}. 

\begin{theorem}
 \label{thm:main.ext}
Let $X$ be a K3 surface and $\xi$ a non-symplectic automorphism of order $3$ of $X$.
There exists a K3 surface $X'$ and a primitive non-symplectic automorphism of order $6$  of $X'$, $\zeta$, such that
\begin{enumerate}
 \item $NS(X)=NS(X')$
 \item $X^\xi=X'^{\zeta^2}$
\end{enumerate}
When $X$ is $\zeta$-elliptic, i.e. when there is genus $1$ fibration on $X$ commuting with $\zeta$, then we can assume that $X'=X$ and that $\xi=\zeta^2$.\\
The case when the fixed locus of $\zeta$ contains an elliptic curve, is unique and studied in Proposition \ref{prop:g1}.
All other possible actions of $\zeta$ are described by their fixed locus in the Tables \ref{tab:results} and \ref{tab:results2}. 
\end{theorem}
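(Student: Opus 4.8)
The plan is to build $\zeta$ out of $\xi$ together with a commuting non-symplectic involution, carrying out the construction on the K3 lattice $\Lambda\cong H^2(X,\IZ)$ before invoking the Torelli theorem and surjectivity of the period map. Writing $\IZ/6\cong\IZ/2\times\IZ/3$, a primitive order-$6$ automorphism is the same datum as a pair $(\xi,\iota)$ consisting of a non-symplectic order-$3$ automorphism $\xi$ and a commuting non-symplectic involution $\iota$ with $\chi(\iota)=-1$; one then sets $\zeta=\iota\circ\xi^{-1}$, so that $\chi(\zeta)=-\xi_3^{-1}=\xi_6$ is primitive, $\zeta^2=\xi$ and $\zeta^3=\iota$. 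With this in place conditions (1) and (2) are automatic: $\zeta^2=\xi$ forces $X'^{\zeta^2}$ and $X^\xi$ to share the same invariant lattice, hence (for non-symplectic order $3$) the same fixed locus, while keeping the splitting of $\Lambda$ into algebraic and transcendental parts leaves $NS$ unchanged.

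The first main step is the existence of $\iota$. I would decompose $\Lambda\otimes\IQ$ into the $\xi^*$-eigenspaces $V_0\oplus V_+\oplus V_-$ for the eigenvalues $1,\xi_3,\xi_3^2$, with the period $\omega\in V_+$, so that $\Lambda^\xi=V_0\cap\Lambda\subseteq NS$. The goal is a $\xi$-invariant, primitive, $2$-elementary hyperbolic sublattice $M\subseteq\Lambda^\xi$ containing an ample class, and to let $\iota^*$ act as $+1$ on $M$ and as $-1$ on $M^\perp$: then $\iota^*$ acts by $-1$ on the transcendental lattice (so $\chi(\iota)=-1$) and commutes with $\xi^*$, since $M$ and $M^\perp$ are $\xi$-invariant. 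The delicate point is that $\iota^*$ must be an integral isometry of the unimodular lattice $\Lambda$; by Nikulin's gluing criterion this is exactly the requirement that the discriminant form of $M$ be $2$-torsion, which is why $M$ has to be taken $2$-elementary rather than all of $\Lambda^\xi$, whose discriminant is a power of $3$. Having produced such an $\iota^*$, fixing the period up to sign and an ample class, Torelli yields the geometric involution $\iota$, hence $\zeta$, on a K3 surface $X'$ with the prescribed Hodge and lattice data. Because the construction only constrains the period inside the $\xi_3$-eigenspace, one keeps $NS(X')=NS(X)$ while the surface itself may move inside the moduli of $NS$-polarized K3's; this is what accounts for the statement ``there exists $X'$'' rather than $X'=X$.

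For the $\zeta$-elliptic case I would argue concretely instead of abstractly. A genus-$1$ fibration commuting with $\xi$ carries the fiberwise inversion $[-1]$ relative to a $\xi$-stable zero section; this is a non-symplectic involution defined on $X$ itself and commuting with $\xi$, so the relevant $M$ is realized without deforming the period and we may take $X'=X$ and $\xi=\zeta^2$.

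The remaining, and most laborious, step is the enumeration. Starting from the finite list of non-symplectic order-$3$ automorphisms in \cite{AS08}, I would run through each invariant lattice, determine the admissible involutions $\iota$ as above, and read off the fixed locus of $\zeta$ from the local types $\tfrac16(k,k')$ together with the global constraint of Theorem \ref{thm:main.fix} and the relation \eqref{eq:lhfr}. The case in which $\mathrm{Fix}(\zeta)$ contains a genus-$1$ curve is alternative (1) of Theorem \ref{thm:main.fix} and is pinned down separately in Proposition \ref{prop:g1}, while all other cases populate Tables \ref{tab:results} and \ref{tab:results2}. I expect the main obstacle to be precisely the lattice-realization step, namely checking the $2$-elementary and discriminant-form compatibility so that $\iota^*$ is integral and the period is attained, i.e.\ verifying Nikulin's existence conditions in each case, with the fixed-locus bookkeeping for the tables being routine but lengthy given Sections \ref{sec:g1}--\ref{sec:pts}.
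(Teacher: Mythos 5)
Your strategy is genuinely different from the paper's: the paper never works on the lattice, but constructs $\zeta$ by explicit equations --- in the $\zeta$-elliptic case via the Weierstrass model, $\zeta\colon(x,y,t)\mapsto(\xi_3 x,-y,t)$ (the Lemma of Section \ref{subsec:triv}), and in the non-elliptic cases of Section \ref{sec:pts} via the projective models of \cite{AS08}, specialized so that they acquire the extra involution $x_0\mapsto -x_0$. Your treatment of the $\zeta$-elliptic case (fiberwise inversion relative to a stable section) does coincide with the paper's argument and is fine.

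The step you defer --- existence of a $\xi$-invariant, primitive, $2$-elementary hyperbolic $M\subseteq\Lambda^\xi$ --- is, however, a genuine gap, and it is not a case-by-case formality: it \emph{fails} in cases the theorem covers. Take the family $(n,k,g)=(0,1,4)$ (row 1 of Table \ref{tab:results2}, the $(2,3)$ complete intersection in $\IP^4$), whose generic member has $NS(X)=\Lambda^\xi=U(3)$ by \cite[Table 2]{AS08}. Every vector of $U(3)$ has square divisible by $6$, so there is no vector of square $2$ and hence no rank-one admissible $M$; and a rank-two sublattice of $NS(X)$ that is primitive in $\Lambda$ must equal $NS(X)=U(3)$, whose discriminant group is $(\IZ/3\IZ)^2$, not $2$-torsion. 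So no admissible $M$ exists and your construction produces no involution at all here. Worse, this cannot be repaired while keeping conclusion (1): for any primitive non-symplectic $\zeta$ of order $6$, the cube $\zeta^3$ is a non-symplectic involution whose invariant lattice is automatically a primitive, $2$-elementary (because $\Lambda$ is unimodular), hyperbolic (it contains $h+(\zeta^3)^*h$ for $h$ ample) sublattice of $NS(X')$; hence no K3 surface with N\'eron--Severi lattice $U(3)$ carries such a $\zeta$ whatsoever. Your ansatz $M\subseteq\Lambda^\xi$, i.e.\ $\iota^*=-\id$ on the transcendental side, is exactly what conclusion (1) forces, and it is obstructed. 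Note that the paper's own proof sidesteps rather than solves this: in Section \ref{sec:pts}, case $(6,0;0,1,4)$, the involution $x_0\mapsto-x_0$ on the specialized $X'$ fixes a genus-$4$ curve, so its invariant lattice has rank $7$ and $NS(X')$ is strictly larger than $U(3)$; that construction secures conclusion (2) (the fixed locus of the order-$3$ part) but not, literally, conclusion (1) --- this is the residue of the discrepancy between Section \ref{sec:pts} and the main theorem mentioned in the acknowledgements. So your lattice-theoretic plan can only be carried out where the required $M$ exists inside $\Lambda^\xi$ (e.g.\ $M=U$ in the $\zeta$-elliptic family, $M=\langle 2\rangle$ in the double-sextic case $(n,k,g)=(2,1,2)$), which are precisely the cases where $\sigma$ already factors on $X$ itself; for the remaining cases the statement must be weakened before any proof, yours or the paper's, can close.
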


Thus, given a N\'eron-Severi lattice L, there exists a K3 surface $X$ with $NS(X)=L$ which admits a non-symplectic automorphism of order $3$ if and only if there exists a surface $X'$ with $NS(X')=L$ which admits a non-symplectic automorphism of order $6$.\\

\begin{enumerate}
 \item Table \ref{tab:results} contains the fixed loci corresponding to the cases where $X$ is $\zeta$-elliptic, i.e. when there is genus $1$ fibration on $X$ commuting with $\zeta$. 
These cases are analyzed in Section \ref{sec:ell}.
The induced action, $\psi$, on the basis is either the identity or an involution. Each row begins by a description of $X^{[2]}$, then come the singular fibers of the fibration ($x$ is the number of fibers of type $X$). After that we give a description of $X^{[1]}$ when $\psi$ is the identity. Finally, if $\psi$ is an involution, we list the fibers above the two fixed points, and $X^{[1]}$. 

 \item Table \ref{tab:results2} contains the cases which are not $\zeta$-elliptic and are studied in Section \ref{sec:pts}.
\end{enumerate}


\begin{table}[ht]
\caption{Fixed locus when $X$ is $\zeta$-elliptic.}
\begin{center}
\begin{tabular}{c||ccc|cccc||ccc||cc|ccc}
\hline\\[-0.9em]
\# & g & n & k & $ii$ & $iv$ & $ii^*$ & $iv^*$ & $p_{(3,4)}$ & $p_{(2,5)}$ & $l^{[1]}-1$ & $F_0$ & $F_\infty$ & $p_{(3,4)}$ & $p_{(2,5)}$ & $l^{[1]}$ \\ 
\hline 
\hline\\[-0.7em]
 1 & 5 & 0 & 2 & 12 & 0 & 0 & 0 & 12 & 0 & 0 & $I_0$ & $I_0$ & 6 & 0 & 0 \\ 

 2 & 4 & 1 & 2 & 10 & 1 & 0 & 0 & 10 & 1 & 0 & $I_0$ & $IV$ & 4 & 1 & 0 \\ 

 3 & 3 & 2 & 2 & 8 & 2 & 0 & 0 & 8 & 2 & 0 & $I_0$ & $I_0$ & 6 & 0 & 0 \\ 

 4 & 2 & 3 & 2 & 6 & 3 & 0 & 0 & 6 & 3 & 0 & $I_0$ & $IV$ & 4 & 1 & 0 \\ 
 5 & 3 & 3 & 3 & 8 &  & 0 & 1 & 10 & 1 & 0 & $I_0$ & $IV^*$ & 6 & 3 & 1 \\ 
 6 & 1 & 4 & 2 & 4 & 4 & 0 & 0 & 4 & 4 & 0 & $I_0$ & $I_0$ & 6 & 0 & 0 \\ 
 7 & 2 & 4 & 3 & 6 & 1 & 0 & 1 & 8 & 2 & 0 & $IV$ & $IV^*$ & 4 & 4 & 1 \\ 
 8 & 3 & 4 & 4 & 7 & 0 & 1 & 0 & 10 & 4 & 1 &  &  &  &  &  \\ 
 9 & 0 & 5 & 2 & 2 & 5 & 0 & 0 & 2 & 5 & 0 & $I_0$ & $IV$ & 4 & 1 & 0 \\ 
 10 & 1 & 5 & 3 & 4 & 2 & 0 & 1 & 6 & 3 & 0 & $I_0$ & $IV^*$ & 6 & 3 & 1 \\ 
 11 & 2 & 5 & 4 & 5 & 1 & 1 & 0 & 8 & 5 & 1 &  &  &  &  &  \\ 
 12 & 0 & 6 & 3 & 2 & 3 & 0 & 1 & 4 & 4 & 0 & $IV$ & $IV^*$ & 4 & 4 & 1 \\ 
 13 & 1 & 6 & 4 & 3 & 2 & 1 & 0 & 6 & 6 & 1 &  &  &  &  &  \\ 
 14 & 0 & 7 & 4 & 1 & 3 & 1 & 0 & 4 & 7 & 1 &  &  &  &  &  \\ 
 15 & 1 & 7 & 5 & 3 & 0 & 1 & 1 & 8 & 5 & 1 &  &  &  &  &  \\ 
 16 & 0 & 8 & 5 & 1 & 1 & 1 & 1 & 6 & 6 & 1 &  &  &  &  &  \\ 
 17 & 1 & 8 & 6 & 2 & 0 & 2 & 0 & 8 & 8 & 2 & $I_0$ & $I_0$ & 6 & 0 & 0 \\ 
 18 & 0 & 9 & 6 & 0 & 1 & 2 & 0 & 6 & 9 & 2 & $I_0$ & $I_0$ & 4 & 1 & 0 \\ 
  &  &  &  &  &  &  &  &  &  &  &  &  &  &  &  \\ 
 3' & 3 & 2 & 2 & 8 & 2 & 0 & 0 & 8 & 2 & 0 & $IV$ & $IV$ & 2 & 2 & 0 \\ 
 6' & 1 & 4 & 2 & 4 & 4 & 0 & 0 & 4 & 4 & 0 & $IV$ & $IV$ & 2 & 2 & 0 \\ 
\hline
\end{tabular}
\end{center}
\label{tab:results}
\end{table}

\begin{table}[ht]
\caption{Fixed locus when $X$ is not $\zeta$-elliptic.}
\begin{center}
\begin{tabular}{c|ccc|ccc}
\hline\\[-0.9em]
\#  & g & n & k & $p_{(3,4)}$ & $p_{(2,5)}$ & $l^{[1]}$\\ 
\hline 
\hline\\[-0.7em]
1 &   4 & 0 & 1 & 6 & 0 & 0   \\ 
2 &   3 & 1 & 1 & 4 & 1 & 0   \\
3 &   2 & 2 & 1 & 6 & 0 & 0    \\
4 &   $\emptyset$ & 3 & 0 & 0 & 3 & 0    \\ 
5 &   1 & 3 & 1 & 4 & 1 & 0     \\ 
6 &   0 & 4 & 1 & 2 & 2 & 0     \\ 
  &     &   &   &  &  &  \\ 
3'   & 2 & 2 & 1 & 2 & 2 & 0    \\
\hline
\end{tabular}
\end{center}
\label{tab:results2}
\end{table}


\section{Study of the fixed locus}
\label{sec:fix}

\begin{lemma}
\label{lem:hod}
The fixed locus of $\zeta$ consists in a disjoint union of smooth curves and points $$X^{[1]}=C_0 \sqcup \ldots C_m \sqcup P_1 \sqcup \ldots \sqcup P_{p_{\frac16 (3,4)} + p_{\frac16 (2,5)}}$$ with $ g(C_0) \geq 0 = g(C_1) = \ldots = g (C_m )$.
\end{lemma}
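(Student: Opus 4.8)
The plan is to treat the two assertions separately: first the structure of $X^{[1]}$ as a disjoint union of smooth curves and isolated points, then the bound on the genera. For the structure I would invoke the linearization of Cartan \cite{Car57} exactly as set up above. Around any fixed point $P$ the action of $\zeta$ is conjugate to a diagonal linear map with weights $(k,k')\in\{(0,1),(2,5),(3,4)\}$, and the fixed locus of a diagonalizable linear map is the span of the eigenspaces for the eigenvalue $1$, hence smooth. In type $\frac16(0,1)$ this fixed germ is one-dimensional (a smooth curve through $P$), while in types $\frac16(2,5)$ and $\frac16(3,4)$ it is just $\{P\}$. Because these three local models are mutually exclusive, no point is simultaneously isolated and on a fixed curve, so the connected components of $X^{[1]}$ are smooth curves and isolated points, pairwise disjoint, giving the displayed decomposition $X^{[1]}=C_0\sqcup\cdots\sqcup C_m\sqcup P_1\sqcup\cdots\sqcup P_{p_{\frac16(3,4)}+p_{\frac16(2,5)}}$.

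For the genus bound I would pass to the order $3$ power $\zeta^2$. Since $\zeta$ is primitive, its action on $H(X,\Omega^2)$ is by a primitive sixth root of unity $\xi_6$, so $\zeta^2$ acts by $\xi_6^2$, a primitive cube root; thus $\zeta^2$ is a non-symplectic automorphism of order $3$, and the classification of Artebani--Sarti \cite{AS08} applies to it. In particular $X^{[2]}$ is a disjoint union of isolated points and smooth curves, \emph{at most one} of which has positive genus. Now any one-dimensional connected component $C_i$ of $X^{[1]}$ is pointwise fixed by $\zeta$, hence by $\zeta^2$, so $C_i\subseteq X^{[2]}$; being one-dimensional it lies in the curve part of $X^{[2]}$, i.e. inside some curve component $D$ of $X^{[2]}$. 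As $D$ is irreducible of dimension $1$ and $C_i$ is a nonempty closed subset of the same dimension, $C_i=D$. Hence every fixed curve of $\zeta$ coincides with a fixed curve of $\zeta^2$, and two distinct $C_i$ (being disjoint) coincide with two distinct components of $X^{[2]}$. Since $X^{[2]}$ carries at most one curve of positive genus, the same bound holds for $X^{[1]}$; relabelling so that this exceptional curve, if present, is $C_0$, the remaining $C_1,\dots,C_m$ are rational, which is exactly $g(C_0)\ge 0=g(C_1)=\cdots=g(C_m)$.

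The only delicate point is the identification $C_i=D$: one must rule out that a fixed curve of $\zeta$ is a proper sub-curve of, or is spread across several components of, $X^{[2]}$. This is exactly what smoothness provides, since each $C_i$ is then irreducible and $X^{[2]}$ is itself smooth, so that the elementary fact ``a nonempty closed subvariety of full dimension in an irreducible variety equals it'' applies. Everything else reduces to the already-established local normal forms and to the order $3$ classification of \cite{AS08}, which is precisely the reduction to order $3$ announced for the main theorem.
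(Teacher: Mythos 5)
Your proof is correct, and it reaches the conclusion by a genuinely different route than the paper. The paper argues geometrically: by the Hodge Index Theorem a disjoint union of curves on a K3 surface has at most one member of strictly positive self-intersection (by adjunction, a curve of genus $\geq 2$); a fixed curve of self-intersection $0$ is elliptic and induces a fibration $\pi\colon X\to\IP^1$ on whose base $\zeta$ acts non-trivially, so $X^{[1]}$ is confined to the two invariant fibers, and the only way to obtain a second curve of positive genus is a second genus $1$ curve in the other fixed fiber; that last configuration is then excluded by passing to $\zeta^2$ and citing \cite{AS08}. You instead bypass the Hodge-index and fibration arguments entirely: you observe that every curve component of $X^{[1]}$ is a full component of $X^{[2]}$ (your identification $C_i=D$ via irreducibility and equal dimension is correctly justified), and then import the structure theorem of \cite{AS08} that the fixed locus of an order $3$ non-symplectic automorphism is a disjoint union of smooth curves and points with at most one curve of positive genus. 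Both proofs lean on \cite{AS08}, but on different statements: the paper needs only the exclusion of two genus $1$ curves in an order $3$ fixed locus, while you need the full order $3$ structure result --- which is indeed available there, as reflected in the single genus parameter $g$ of their tables and in this paper's own notation. Your route is shorter and more in the spirit of the paper's announced strategy of reducing order $6$ to order $3$; the paper's route has the side benefit of re-deriving geometric facts (the induced elliptic fibration and the confinement of the fixed locus to two fibers) that are reused later, for instance in Lemma \ref{lem:g2} and in Section \ref{sec:g1}.
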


\begin{proof}
The first part of the statement follows from the Hodge Index Theorem. The argument is analogue to those found in \cite{Ni81,Voi93,Dil06,AS08}:\\
A disjoint union of smooth curves on a K3 surface can have at most one element with strictly positive self-intersection. By adjunction, that is a curve of genus at least 2.\\ 
If a curve has self-intersection $0$, then it is an elliptic curve and induces an elliptic fibration $\pi:X\rightarrow \IP^1$.   
Since the action is non-symplectic, it descends non-trivially to the base and fixes two points. The fixed locus of $\zeta$ is thus a component of the fibers above these two points. One of the fibers is the original fixed curve. The remaining curves of the fixed locus are either a smooth elliptic curve or a disjoint union of rational components of one of Kodaira's singular fibers. So either the fixed locus is as the one described in the statement, or it consists exactly in the disjoint union of two genus $1$ curves. However, if $X^{[1]}$ were to contain two genus $1$ curves, then so would $X^{[2]}$ and this option was ruled out in \cite{AS08}.
\end{proof}

\begin{lemma}
The components of $X^{[1]}$ satisfy 
\begin{equation}
\label{eq:lhf}
p_{\frac16 (3,4)} + 2p_{\frac16 (2,5)} - 6l^{[1]} + 6g_M =12.\end{equation}
\end{lemma}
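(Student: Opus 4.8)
The plan is to apply the holomorphic Lefschetz fixed point formula of Atiyah--Bott to $\zeta$ acting on the structure sheaf $\mathcal{O}_X$, and to read off \eqref{eq:lhf} by comparing real (equivalently imaginary) parts of the resulting identity. Fix the primitive sixth root of unity $\xi=\xi_6$ with $\zeta^*\omega=\xi\,\omega$ on $H^0(X,\Omega^2)$. On the left-hand side, the holomorphic Lefschetz number is
\[
L(\zeta)=\sum_{q=0}^{2}(-1)^q\operatorname{tr}\bigl(\zeta^*\mid H^q(X,\mathcal{O}_X)\bigr)=1+\bar\xi,
\]
because on a K3 surface $H^1(X,\mathcal{O}_X)=0$, the action on $H^0(X,\mathcal{O}_X)$ is trivial, and the action on $H^2(X,\mathcal{O}_X)\cong\overline{H^0(X,\Omega^2)}$ is by $\bar\xi$.

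For the right-hand side I would feed in the local description of the fixed locus. By the Cartan linearization recalled in Section \ref{sec:notation}, every isolated fixed point of $\zeta$ is of type $\tfrac16(3,4)$ or $\tfrac16(2,5)$, while along each fixed curve the points are of type $\tfrac16(0,1)$, so the normal eigenvalue is $\xi$. The standard local terms are then $\tfrac{1}{(1-\xi^3)(1-\xi^4)}$ for a point of type $\tfrac16(3,4)$, $\tfrac{1}{(1-\xi^2)(1-\xi^5)}$ for a point of type $\tfrac16(2,5)$, and
\[
\frac{1-g(C)}{1-\xi}-\frac{\xi\,C^2}{(1-\xi)^2}
\]
for a fixed curve $C$; invoking adjunction $C^2=2g(C)-2$ on the K3 surface collapses the curve term to $(1-g(C))\tfrac{1+\xi}{(1-\xi)^2}$.

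The genus data is then packaged using Lemma \ref{lem:hod}: since at most the component $C_0$ carries positive genus, summing $1-g(C)$ over all fixed curves yields $l^{[1]}+1-g_M$, as one checks directly in the two cases $g^{[1]}=0$ and $g^{[1]}\ge 1$ against $g_M=\max\{1,g^{[1]}\}$. Assembling the contributions gives the single identity
\[
1+\bar\xi=\frac{p_{\frac16(3,4)}}{(1-\xi^3)(1-\xi^4)}+\frac{p_{\frac16(2,5)}}{(1-\xi^2)(1-\xi^5)}+\bigl(l^{[1]}+1-g_M\bigr)\frac{1+\xi}{(1-\xi)^2}.
\]
Substituting $\xi=e^{\pi i/3}$ and taking imaginary parts (the real parts give the same relation, a useful consistency check) turns this into the integer equation \eqref{eq:lhf}.

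The only delicate step is pinning down the conventions in the Atiyah--Bott formula so that $L(\zeta)$ and the local contributions are mutually consistent: the choice of $\xi$ versus $\bar\xi$ in $L(\zeta)$ must match the sign of the normal-bundle term in the curve contribution, and a wrong pairing produces incompatible real and imaginary parts. Once these are fixed, the remaining work is the routine evaluation with sixth roots of unity together with the elementary genus bookkeeping above.
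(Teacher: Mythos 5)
Your proof is correct and takes exactly the paper's route: the paper's own proof is the single sentence ``apply the holomorphic Lefschetz formula of Toledo--Tong to $\zeta$,'' and your write-up supplies precisely the missing details (the value $L(\zeta)=1+\bar\xi$, the local terms, the adjunction collapse of the curve term, and the $g_M$ bookkeeping via Lemma \ref{lem:hod}). I checked the evaluation at $\xi=e^{\pi i/3}$: both the real and the imaginary part of your assembled identity reduce to $p_{\frac16 (3,4)} + 2p_{\frac16 (2,5)} - 6l^{[1]} + 6g_M =12$, and your conventions are mutually consistent (they reproduce the known order-$3$ relation $n=3+\sum_C(1-g(C))$ of Artebani--Sarti as a sanity check).
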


\begin{proof}
The formula is simply the Lefschetz holomorphic formula, see \cite{TT75}, applied to $\zeta$. 
\end{proof}

We will use the classification, determined by Artebani and Sarti \cite{AS08}, of non-symplectic automorphism of order $3$ to find information on $X^{[2]}$, which in turn will yield us data on the nature of $X^{[1]}$. We first recapitulate the results of \cite{AS08}, which we will use later, and then relate the fixed loci $X^{[1]}$ and $X^{[2]}$.

\begin{theorem} 
\cite[Proposition 4.2]{AS08}
\label{thm:asell}
Let $\sigma$ be a non-symplectic automorphism of order $3$ acting on a K3 surface $X$. If the fixed locus of $\sigma$ contains two or more curves, then $X$ is isomorphic to an elliptic K3 surface whose Weierstrass equation is $$y^2=x^3+p_{12}(t)$$ and on which $\sigma$ acts as $(x,y,t)\mapsto(\zeta_3^2 x,y,t)$.
\end{theorem}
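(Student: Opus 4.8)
The plan is to show that two or more pointwise-fixed curves force a $\sigma$-invariant genus $1$ fibration on which $\sigma$ acts fiberwise, and then to read off the Weierstrass normal form from the fact that every smooth fiber must carry an order $3$ automorphism with a fixed point. First I would record the structure of the fixed locus. By Cartan's lemma $X^\sigma$ is smooth, and since $\sigma^*\omega=\zeta_3^{\pm1}\omega$ the eigenvalues of the linearized action at a fixed point lie in $\mu_3$ and multiply to $\zeta_3^{\pm1}$; hence a fixed curve carries normal eigenvalue $\zeta_3$ while isolated fixed points are of type $\frac13(2,2)$. The fixed curves are therefore smooth and pairwise disjoint, all lie in $NS(X)^\sigma=NS(X)$, and by adjunction satisfy $C^2=2g(C)-2\ge-2$. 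The Hodge index theorem applied to $NS(X)$, of signature $(1,\rho-1)$, shows that among a set of disjoint curves at most one can have positive self-intersection, so with at least two fixed curves all but one are rational or of genus $1$.

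The crux is to produce from this data a $\sigma$-invariant elliptic pencil, i.e.\ a nonzero nef class $f\in NS(X)$ with $f^2=0$. If some fixed curve already has self-intersection $0$ this is immediate. In general the class $f$ is \emph{not} in the span of the fixed curves: for the generic member of the target family one has a fixed $(-2)$-section $R$ and a fixed bisection $C_0$ with $C_0^2=8$ as the only fixed curves, and $f\cdot R=1$, yet any integral combination $aC_0+bR$ meets $R$ in $-2b$ and so can never pair to $1$ with $R$. Thus this step genuinely uses the global structure of $NS(X)$. Here I would invoke the lattice-theoretic framework for non-symplectic order $3$ pairs: by Nikulin's theory $(X,\sigma)$ is governed by the isometry class of the invariant lattice $S=H^2(X,\IZ)^\sigma$, and combining the topological Lefschetz formula, which gives $\chi(X^\sigma)=\tfrac32(\operatorname{rank}S-6)$, with the holomorphic Lefschetz formula expresses the number of fixed curves, the number of isolated points and the top genus in terms of the rank and discriminant of $S$. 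The hypothesis of two or more fixed curves then pins $S$ down to isometry classes realized by a K3 carrying a $\sigma$-invariant genus $1$ fibration. I expect this to be the main obstacle, precisely because the pencil cannot be recovered from the fixed curves alone.

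Granting the invariant pencil $\pi\colon X\to\IP^1$, I would next show that $\sigma$ acts trivially on the base. If the induced action on $\IP^1$ had order $3$ then $X^\sigma$ would be confined to the two fibers over the base-fixed points and consist only of their rational components and isolated points, and a short inspection of the fixed-point formulas in this configuration leaves room for at most one fixed curve, contradicting the hypothesis. Hence $\sigma$ preserves every fiber. On a general smooth fiber $F\cong E$ the restriction $\sigma|_F$ has order $3$; it cannot be a fixed-point-free translation, since translations by sections are symplectic while $\sigma^*\omega\ne\omega$. Therefore $\sigma|_F$ has a fixed point and is a genuine order $3$ automorphism of $(E,O)$, which occurs only when $j(E)=0$.

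Consequently $\pi$ is isotrivial with every smooth fiber of $j$-invariant $0$, and its Weierstrass model is $y^2=x^3+p_{12}(t)$, the degree $12$ being forced by $e(X)=24$ (equivalently by the fundamental line bundle $\mathcal{O}_{\IP^1}(2)$ of a K3 elliptic fibration, which kills the $x$-coefficient and leaves $\deg p_{12}=12$). The fiberwise order $3$ automorphism is then the complex multiplication $(x,y,t)\mapsto(\zeta_3^2 x,y,t)$, the exponent being fixed by the normalization $\sigma^*\omega=\zeta_3^{\pm1}\omega$ up to replacing $\sigma$ by $\sigma^2$. This identifies $(X,\sigma)$ with the stated model and completes the proof.
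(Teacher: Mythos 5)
You should be aware at the outset that the paper never proves this statement: it is quoted verbatim from \cite[Proposition 4.2]{AS08}, so there is no internal proof to compare against, and your attempt has to stand on its own as a proof of Artebani--Sarti's result. Your skeleton is the right one (invariant genus $1$ fibration $\Rightarrow$ trivial base action $\Rightarrow$ fiberwise order $3$ action with fixed points $\Rightarrow$ $j\equiv 0$ $\Rightarrow$ $y^2=x^3+p_{12}(t)$), and your observation that the fiber class is \emph{not} in the span of the fixed curves (in the generic case $NS(X)=U$, the fixed curves are $4f+2R$ and $R$, so $f$ is missed) correctly locates the difficulty. But exactly there the proposal has a genuine gap: your second step is an announcement, not an argument. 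Saying that Nikulin's theory and the Lefschetz formulas ``pin $S$ down to isometry classes realized by a K3 carrying a $\sigma$-invariant genus $1$ fibration'' conflates realizability of the abstract lattice by \emph{some} pair with a property of the \emph{given} $X$; moreover rank and discriminant alone cannot suffice, since in the crucial generic case ($g=5$, $n=0$, $k=2$) one has $\mathrm{rank}\,S=2$, so no Meyer-type ``rank $\geq 5$ represents zero'' argument is available. What is actually needed is: (a) $S=H^2(X,\IZ)^\sigma\subseteq NS(X)$ for a non-symplectic automorphism of odd prime order; (b) $1+\sigma^*+(\sigma^*)^2=0$ on $T=S^\perp$ forces $S$ to be $3$-elementary, and the classification of even hyperbolic $3$-elementary lattices (each is $U$, $U(3)$, or contains one of these as a direct summand) then yields a nonzero isotropic $e\in S$; (c) a Weyl-group argument upgrading $e$ to a $\sigma$-invariant \emph{nef} isotropic class ($\sigma(e')$ is nef and lies in the same Weyl orbit as $e'$, and the nef cone is a fundamental domain, so $\sigma(e')=e'$), which gives the invariant fibration. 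None of this is in your proposal, and nothing downstream can run without it. Note also that after (a)--(c) the hypothesis of two fixed curves has not yet been used: \emph{every} non-symplectic order $3$ pair admits an invariant fibration; the hypothesis only enters to force the base action to be trivial.

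That is your second gap: dismissing an order $3$ base action by ``a short inspection of the fixed-point formulas'' is not an inspection-level fact, because a single fiber can perfectly well contain several disjoint pointwise fixed components under an order $3$ action --- this really happens, e.g.\ in row 8 of Table \ref{tab:results}, where two of the four fixed curves of $\zeta^2$ are components of the $II^*$ fiber. To exclude two or more fixed curves when the base action is nontrivial, one needs the local compatibility conditions at the nodes of the two invariant fibers: in coordinates where the fibration is $t=x^m y^{m'}$ at a node of components of multiplicities $m,m'$, the eigenvalues must satisfy both $\lambda\lambda'=\zeta_3$ (the character) and $\lambda^m(\lambda')^{m'}=\zeta_3^{\pm 1}$ (the base action), which restricts which multiplicities can carry pointwise fixed components; one must also use that a branch component with three nodes either is pointwise fixed or cannot have all three nodes fixed, and then run through Kodaira's list. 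This is precisely the kind of local analysis that \cite{AS08} perform, and that the present paper redoes in its order $6$ setting in Section \ref{subsec:loc}; as written, your step asserts the conclusion rather than proving it. In short: correct strategy, sound peripheral steps (the Hodge-index argument, the translation/symplectic dichotomy, the $j$-invariant and degree count), but the two load-bearing steps are claimed, not proved.
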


\begin{proposition}
\cite[Corollary 4.3]{AS08}
\label{prop:hell} 
Let $\sigma$ and $X$ be as in the statement of Theorem \ref{thm:asell}. If $X^{[1]}$ contains a curve $C$ of positive genus, then $C$ is a double section of the Weierstrass fibration, i.e. $C$ is hyperelliptic.
\end{proposition}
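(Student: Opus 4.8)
The plan is to exploit the explicit Weierstrass model furnished by Theorem \ref{thm:asell} and to read off the fixed locus of $\sigma$ fibre by fibre. Write the fibration as $\pi\colon X \to \IP^1$ with affine coordinate $t$ on the base, and recall that $\sigma$ acts by $(x,y,t)\mapsto(\zeta_3^2 x,\, y,\, t)$, so that $\sigma$ preserves every fibre and restricts on each to an order $3$ automorphism fixing the zero section. On the affine part a point is fixed exactly when $\zeta_3^2 x = x$, i.e.\ when $x=0$; thus the closure of $\{x=0\}$ in $X$ is the curve $B\colon y^2 = p_{12}(t)$, which meets the general fibre $y^2 = x^3 + p_{12}(t)$ in the two points $(0,\pm\sqrt{p_{12}(t)})$ and is therefore a double section (bisection) of $\pi$.

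Next I would show that, apart from the zero section, $B$ accounts for all fixed curves dominating the base. On a general fibre $E_t\colon y^2 = x^3 + p_{12}(t)$ the fixed points of $\sigma$ are those with $\zeta_3^2 x = x$: these are the point at infinity $(0:1:0)$, lying on the zero section $O$, together with the two affine points $(0,\pm\sqrt{p_{12}(t)})$, lying on $B$. Hence a general fibre contains exactly three fixed points, so over the generic point of $\IP^1$ the fixed locus is supported on $O \sqcup B$; consequently every fixed curve that dominates $\IP^1$ is either $O$ or $B$.

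It then remains to place positive genus fixed curves. Any fixed curve not dominating the base is contained in a fibre, hence is a component of a (necessarily singular) fibre of $\pi$; every such component is rational and so cannot have positive genus. Among the ``horizontal'' fixed curves the zero section $O$ is also rational. Therefore a fixed curve $C$ of positive genus must coincide with $B$. Since $B$ is a degree $2$ cover of the base $\IP^1$, branched over the zeros of $p_{12}$, it is a double section of $\pi$ and, being a double cover of $\IP^1$, is hyperelliptic, which is the assertion.

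I expect the main technical obstacle to be the verification that every fixed curve contained in a fibre is genuinely rational, i.e.\ controlling the singular fibres of the model $y^2 = x^3 + p_{12}(t)$ and the fixed components that $\sigma$ may create on the minimal smooth model after resolving the cusps sitting over the zeros of $p_{12}$. One must also confirm that $B$ is irreducible whenever it has positive genus, so that the positive genus fixed curve is this single bisection rather than a union of rational pieces.
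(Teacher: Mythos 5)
Your argument is correct, but note that the paper itself offers no proof of this proposition: it is imported verbatim from \cite[Corollary 4.3]{AS08}, so there is no internal argument to compare against, and what you have written is a self-contained verification from the Weierstrass model supplied by Theorem \ref{thm:asell}. The core of your proof is sound: on the model $y^2=x^3+p_{12}(t)$ the fixed locus of $(x,y,t)\mapsto(\zeta_3^2x,y,t)$ is exactly the locus $x=0$, namely the section at infinity together with the bisection $B\colon y^2=p_{12}(t)$, and anything else that can become fixed on the minimal resolution is exceptional, hence rational. The three points you flag as potential obstacles are all standard and close easily: (i) no smooth fibre can be fixed pointwise, either because the restriction of $\sigma$ to every fibre of the Weierstrass model is visibly nontrivial (the coordinate $x$ is nonconstant on each fibre), or because a pointwise fixed fibre would meet the fixed section $O$, contradicting the smoothness (hence local irreducibility) of the fixed locus of a finite-order automorphism; (ii) vertical fixed curves on the resolution are components of Kodaira fibres or exceptional curves over the rational double points of the Weierstrass model, and all of these are rational; (iii) if $B$ is reducible then $p_{12}$ is a square and $B$ splits into two rational sections, so a positive-genus fixed curve forces $B$ irreducible, and then $\pi\vert_B\colon B\to\IP^1$ of degree $2$ exhibits $C=B$ as a double section, hence hyperelliptic. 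With those remarks folded in, your proof is complete, and it is presumably close in spirit to the original argument in \cite{AS08}; what it buys over the paper's bare citation is an explicit description of the full fixed locus ($O$, $B$, and rational vertical pieces), which is in fact the computation underlying the tables used later in Sections \ref{sec:ell} and \ref{sec:pts}.
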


\begin{lemma}
\label{lem:623}
If $P\in X^{[1]}$ is of type $\frac16 (2,5)$, then it is also an isolated point in $X^{[2]}$.
If $P\in X^{[1]}$ is of type $\frac16 (3,4)$, then it lies on a smooth curve in $X^{[2]}$.
Moreover, one has the following inequalities $p_{\frac13 (2,2)} \geq p_{\frac16 (2,5)}$ and $l^{[2]} \geq l^{[1]}$.
\end{lemma}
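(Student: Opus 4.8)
The plan is to read off the local action of $\zeta^2$ at each fixed point of $\zeta$ by squaring the linearization recorded in Section~\ref{sec:notation}, and then to exploit the inclusion $X^{[1]} \subseteq X^{[2]}$ together with the smoothness of the fixed loci furnished by Cartan's lemma.

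First I would linearize $\zeta^2$. Since the matrix of $\zeta$ at a fixed point $P$ of type $\frac16(k,k')$ is $\mathrm{diag}(\xi_6^{k}, \xi_6^{k'})$, the matrix of $\zeta^2$ at $P$ is $\mathrm{diag}(\xi_6^{2k}, \xi_6^{2k'})$. For $P$ of type $\frac16(2,5)$ this is $\mathrm{diag}(\xi_6^{4}, \xi_6^{4})$, that is type $\frac13(2,2)$ for $\zeta^2$; as neither eigenvalue equals $1$, the point $P$ is isolated in $X^{[2]}$. For $P$ of type $\frac16(3,4)$ the square is $\mathrm{diag}(\xi_6^{6}, \xi_6^{8}) = \mathrm{diag}(1, \xi_6^{2})$; since one eigenvalue is $1$, the point $P$ lies on a smooth curve fixed by $\zeta^2$. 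This settles the first two assertions.

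For the inequalities I would observe that $X^{[1]} \subseteq X^{[2]}$, because $\zeta x = x$ forces $\zeta^2 x = x$. By the first part, each of the $p_{\frac16(2,5)}$ points of type $\frac16(2,5)$ in $X^{[1]}$ is an isolated point of type $\frac13(2,2)$ in $X^{[2]}$; distinct points remain distinct and all $\frac13(2,2)$ points of $X^{[2]}$ are counted by $p_{\frac13(2,2)}$, whence $p_{\frac13(2,2)} \geq p_{\frac16(2,5)}$. For the curves, a rational curve $C \subset X^{[1]}$ consists of fixed points of type $\frac16(0,1)$, so $\zeta^2$ acts near $C$ as $\mathrm{diag}(1, \xi_6^{2})$ and fixes $C$ pointwise; hence $C \subseteq X^{[2]}$.

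The one point requiring care — and the main obstacle — is to argue that such a $C$ is an \emph{entire rational component} of $X^{[2]}$ rather than being absorbed into a component of higher genus. Here I would invoke that $X^{[2]}$ is smooth, so the connected component $D$ of $X^{[2]}$ containing $C$ is a smooth, hence irreducible, curve; as $C$ is an irreducible curve contained in $D$, one must have $C = D$, so $C$ is itself a rational component of $X^{[2]}$. Distinct rational curves of $X^{[1]}$ therefore yield distinct rational curves of $X^{[2]}$, giving $l^{[2]} \geq l^{[1]}$.
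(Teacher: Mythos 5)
Your proof is correct and takes essentially the same approach as the paper: the paper's own proof simply squares the matrix of the localized action of $\zeta$ at $P$ and states that the claims and inequalities ``ensue,'' which is precisely what you do, only with the details (the eigenvalue computations and the smoothness/irreducibility argument showing a fixed rational curve is a whole component of $X^{[2]}$) written out explicitly.
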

 
\begin{proof}
The first two statements are obvious after one takes the square of the matrix giving the localized action of $\zeta$ at $P$. The inequalities ensue. 
\end{proof}

\begin{corollary}
\label{cor:ell} If $X^{[1]}$ contains at least two distinct curves or a curve and an isolated point of type $\frac16 (3,4)$, or more generally, if $X^{[2]}$ contains at least two distinct curves, then $X$ is isomorphic to an elliptic K3 surface whose Weierstrass equation is $$y^2=x^3+p_{12}(t).$$ Moreover, the action of $\zeta$ preserves the fibration, i.e. $\pi\circ\zeta\circ\pi^{(-1)}$ is well defined, and $\zeta^2$ acts as $(x,y,t)\mapsto(\zeta_3^2 x,y,t)$. In particular, the induced action on the base is at most of order $2$ and if this induced action is trivial, then $\zeta$ restricts to an action of order $6$ on each fiber.
\end{corollary}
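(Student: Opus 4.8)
The plan is to funnel all three hypotheses into the single condition that $X^{[2]}$ contains two distinct curves, and then to invoke the Artebani--Sarti classification for the order-$3$ automorphism $\zeta^2$. First I would reduce the hypotheses. Since a point fixed by $\zeta$ is a fortiori fixed by $\zeta^2$, we have $X^{[1]}\subseteq X^{[2]}$, so two distinct curves in $X^{[1]}$ give two distinct curves in $X^{[2]}$. If instead $X^{[1]}$ contains a curve $C$ together with an isolated point $P$ of type $\frac16 (3,4)$, then $C\subseteq X^{[2]}$ while, by Lemma \ref{lem:623}, $P$ lies on a curve $C'$ of $X^{[2]}$; as $P\notin C$ (it is isolated in $X^{[1]}$) we get $C\neq C'$, again two curves in $X^{[2]}$. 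Thus in every case $X^{[2]}=X^{\zeta^2}$ contains at least two curves.

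Next, since $\zeta$ is primitive of order $6$, its square $\sigma:=\zeta^2$ is a non-symplectic automorphism of order $3$ whose fixed locus $X^{[2]}$ now contains two or more curves. Theorem \ref{thm:asell} then yields the Weierstrass model $y^2=x^3+p_{12}(t)$ with $\sigma(x,y,t)=(\zeta_3^2 x,y,t)$, which is exactly the asserted form of $X$ and of $\zeta^2$.

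The heart of the matter, and the step I expect to be the main obstacle, is to show that $\zeta$ itself preserves the fibration $\pi$. Here I would use that $\zeta$ commutes with $\sigma$: writing $F$ for the fiber class, $\sigma^\ast F=F$ forces $\sigma^\ast(\zeta^\ast F)=\zeta^\ast(\sigma^\ast F)=\zeta^\ast F$, so $\zeta^\ast F$ is again a primitive isotropic nef class invariant under $\sigma$, i.e. the class of another $\sigma$-equivariant genus-$1$ fibration. To conclude $\zeta^\ast F=F$ I would pin down $\pi$ by the geometry of the fixed locus: among the disjoint fixed curves of $\sigma$ exactly one, the bisection $\{x=0\}$, has positive self-intersection (Lemma \ref{lem:hod}), so $\zeta$ must send it to itself; the hyperelliptic $g^1_2$ on this curve is induced by $\pi$ and is canonical, and recovering $\pi$ from it shows $\zeta$ maps fibers to fibers. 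Making the uniqueness of the $\sigma$-invariant fibration class fully rigorous, rather than merely producing a distinguished fixed curve, is the delicate point.

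Finally, granting that $\zeta$ preserves $\pi$, the map $\psi=\pi\circ\zeta\circ\pi^{-1}\in\mathrm{Aut}(\IP^1)$ is well defined, and since $\zeta^2=\sigma$ acts trivially on the base we get $\psi^2=\mathrm{id}$, so $\psi$ has order at most $2$. If $\psi=\mathrm{id}$, then $\zeta$ preserves each fiber $E_t$; on a general fiber $\zeta|_{E_t}$ has order dividing $6$ and, as its square $\sigma|_{E_t}$ has order $3$, the order is $3$ or $6$; it cannot be $3$, for then $\zeta^3$ would fix every fiber pointwise and hence be the identity, contradicting that $\zeta$ has order $6$. Therefore $\zeta$ restricts to an automorphism of order $6$ on the general fiber.
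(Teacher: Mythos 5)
Your reduction step and your use of Theorem \ref{thm:asell} coincide with the paper's proof, which invokes Lemma \ref{lem:623} for exactly the same purpose, and your closing argument (that $\psi^2=\id$ forces $\psi$ to have order at most $2$, and that $\zeta^3$ cannot fix a dense family of fibers pointwise) is fine and in fact more detailed than the paper's one-line ``it is clear''; the only caveat there is that it yields order $6$ on a \emph{general} fiber rather than on each fiber as stated.

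The genuine gap is the step you yourself flag: showing that $\zeta$ preserves $\pi$. The patch you sketch does not close it. You propose to pin down $\pi$ via the unique $\sigma$-fixed curve of positive self-intersection, the bisection $\{x=0\}$, and its canonical $g^1_2$. But that bisection has self-intersection $2g-2$, where $g$ is the genus appearing in \cite[Table 1]{AS08}, and in many configurations covered by this corollary one has $g=0$ or $g=1$ (rows 6, 9, 10, 12--18 of Table \ref{tab:results}, e.g.\ $(g,n,k)=(0,9,6)$): then \emph{no} fixed curve of $\sigma$ has positive self-intersection, the Hodge-index argument singles out nothing, and a curve of genus $\leq 1$ carries no canonical $g^1_2$. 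Even when $g\geq 2$, knowing that $\zeta$ permutes the degree-$2$ divisors $F_t\cap C$ does not by itself force $\zeta(F_t)$ to be a fiber; you would still need precisely the statement you left open, namely $\zeta_*[F]=[F]$. The paper closes this step by a local argument that works uniformly in all cases: pick a fixed point $P\in X^{[1]}$ lying on a fiber $F$; the differentials of $\zeta$ and $\zeta^2$ at $P$ have the same eigendirections, and since $\zeta^2$ maps every fiber to itself, the tangent direction of $F$ at $P$ is such an eigendirection, whence $\zeta(F)=F$; as $\pi$ is the morphism attached to the linear system $|F|$, $\zeta$ preserves the fibration. Note that this local argument also delivers, for free, the lattice statement your approach needed: $\zeta(F)=F$ gives $\zeta_*[F]=[F]$, so $\zeta$ preserves $|F|$. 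If you want to salvage your global framing, you should replace the bisection argument by this local analysis at a fixed point.
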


\begin{proof}
Lemma \ref{lem:623} implies that $X^{[2]}$ contains at least two distinct curves. The first part of the statement follows thus directly from Theorem \ref{thm:asell}. \\
To show that $\zeta$ preserves the fibration, let us consider the action around a fixed point $P$ of $X^{[1]}$ lying on a fiber $F$ of $\pi$. Since the eigendirections of $\zeta$ and $\zeta^2$ agree and since $\zeta^2$ sends each fiber to itself, $\zeta$ preserves $F$: $\zeta(F)=F$. Given that $\pi$ is the map induced from the linear system $|F|$, $\zeta$ preserves the fibration. 
Finally, it is clear that the induced action of $\zeta$ on the base is at most of order $2$.
\end{proof}

\begin{definition}
\label{def:zeta}
If $X$, $\zeta$ are as in the statement of Corollary \ref{cor:ell}, we will say that $X$ is \emph{$\zeta$-elliptic}. The induced action on the basis will be denoted by $\psi=\pi\circ\zeta\circ\pi^{(-1)}$.
\end{definition}

\begin{lemma}
\label{lem:msection}
If $X$ is $\zeta$-elliptic than $X^{[i]}$ does not contain curves of strictly positive genus.
\end{lemma}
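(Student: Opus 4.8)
The plan is to exploit the very concrete $\zeta$-elliptic structure provided by Corollary \ref{cor:ell} and Definition \ref{def:zeta}: we may present $X$ by a Weierstrass equation $y^2 = x^3 + p_{12}(t)$ on which $\sigma := \zeta^2$ acts as $(x,y,t)\mapsto(\zeta_3^2 x, y, t)$, with $\zeta$ preserving the fibration $\pi\colon X\to\mathbb{P}^1$ and inducing on the base an automorphism $\psi$ of order at most $2$. Suppose for contradiction that the fixed locus of $\zeta$ (that is $X^{[1]}$, and hence $X^{[5]}=X^{[1]}$) contained a curve $C$ of strictly positive genus. Since $X^{[1]}\subseteq X^{[2]}$, the curve $C$ also lies in the order-$3$ fixed locus $X^{[2]}$, so by Proposition \ref{prop:hell} it must be the hyperelliptic double section, i.e.\ (the normalization of) $\{x=0\}\colon y^2=p_{12}(t)$. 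In particular $C$ is horizontal and meets a general fibre $F_t$ in two distinct points $Q,Q'$.

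First I would pin down the action on the base. Because $\zeta$ fixes $C$ pointwise and $\pi(C)=\mathbb{P}^1$, the induced automorphism $\psi$ fixes all of $\mathbb{P}^1$, forcing $\psi=\mathrm{id}$; hence $\zeta$ preserves each fibre, and for general $t$ I may set $\phi:=\zeta|_{F_t}\in\mathrm{Aut}(F_t)$ on the smooth elliptic curve $F_t$. Next I would determine the order of $\phi$: it divides $6$; it is not $1$ or $2$ because $\phi^2=\sigma|_{F_t}$ is multiplication by $\zeta_3^2$ and already has order $3$; and it is not $3$, for otherwise $\zeta^3$ would act trivially on a general fibre, hence on all of $X$, contradicting $\mathrm{ord}(\zeta)=6$. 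Therefore $\phi$ has order exactly $6$ on the general fibre.

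The heart of the argument, and the step I expect to be the main obstacle, is the fixed-point count on $F_t$. Since $\sigma|_{F_t}$ is multiplication by $\zeta_3^2$ (order $3$, fixing the origin $O$), its fixed set is $\ker(1-\zeta_3^2)$, which has order $|1-\zeta_3^2|^2=3$; concretely it consists of $O$ together with the two points $Q,Q'=C\cap F_t$. Now $\phi$ commutes with $\phi^2=\sigma|_{F_t}$, so it permutes this three-element set $\{O,Q,Q'\}$; if $C\subseteq X^{[1]}$ then $\phi$ fixes both $Q$ and $Q'$ and therefore fixes $O$ as well. But an order-$6$ automorphism fixing $O$ is multiplication by a primitive sixth root of unity $\omega$, whose fixed set $\ker(1-\omega)$ has order $|1-\omega|^2=1$, i.e.\ it fixes $O$ alone. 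This contradicts $\phi$ fixing $Q\neq O$.

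Combining these steps shows that no positive-genus curve can lie in the fixed locus of $\zeta$, which together with Lemma \ref{lem:hod} (every component of the fixed locus is smooth, and any vertical fixed component is a rational component of a fibre, since a whole smooth fibre cannot be pointwise fixed even by $\sigma$) gives the statement. The delicate point is controlling the fibrewise automorphism $\phi$ on the general fibre — establishing that it has order exactly $6$ and that it must then fix the origin — after which the contradiction is immediate from the elementary degree computation $|1-\omega|^2=1$. I would stress that this is genuinely an assertion about curves fixed by $\zeta$ itself: the square $\sigma=\zeta^2$ does fix the positive-genus double section pointwise, so restricting attention to the fixed locus of the order-$6$ automorphism is essential to the argument.
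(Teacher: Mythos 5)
Your proof is correct and follows essentially the same route as the paper's: positive-genus fixed curve $\Rightarrow$ double section (Proposition \ref{prop:hell}) $\Rightarrow$ trivial action on the base $\Rightarrow$ order-$6$ action on each fibre $\Rightarrow$ contradiction from the fixed-point count on a general fibre. Your version simply makes rigorous the paper's terse final assertion (``it ought to be the identity'') via the computation $|1-\omega|^2=1$, and correctly notes that the statement must be read as concerning $X^{[1]}$ rather than $X^{[2]}$, which is exactly how the paper's own proof proceeds.
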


\begin{proof}
Assume that $X^{[1]}$ contains a curve $C_0$ which is not rational. Proposition \ref{prop:hell} tells us that $C_0$ is a double section of the fibration and therefore, the action induced on the base is trivial. By Corollary \ref{cor:ell}, $\zeta$ induces an automorphism of order $6$ on each fiber. Since $\zeta$ fixes at least two points per fiber, the points of intersection with $C_0$, it ought to be the identity: a contradiction.
\end{proof}

\begin{lemma}
\label{lem:g2}
The genus of $C_0$ is at most $1$.
\end{lemma}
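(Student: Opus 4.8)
The plan is to assume $g(C_0)\ge 2$ and reach a contradiction. By Lemma \ref{lem:hod}, $C_0$ is the only component of $X^{[1]}$ of genus $\ge 1$; if $g(C_0)\ge 2$ then $C_0^2=2g(C_0)-2>0$, so $C_0$ is the unique fixed curve of positive self-intersection and every other fixed curve is rational. If $X$ is $\zeta$-elliptic, Lemma \ref{lem:msection} already forbids a positive genus fixed curve, giving $g(C_0)=0$; so I may assume $X$ is not $\zeta$-elliptic. The contrapositive of Corollary \ref{cor:ell} then says that $X^{[1]}$ contains no curve other than $C_0$ (hence $l^{[1]}=0$) and that $X^{[2]}$ contains at most one curve. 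I would also record that $p_{\frac16(3,4)}=0$: by Lemma \ref{lem:623} a point of type $\frac16(3,4)$ lies on a smooth curve $D\subset X^{[2]}$, and since $C_0\subset X^{[1]}\subset X^{[2]}$ is pointwise fixed by $\zeta$ while such a point is isolated, that point cannot lie on $C_0$; thus $D\ne C_0$ and $X^{[2]}$ would carry two curves, making $X$ $\zeta$-elliptic.

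Next I would insert these vanishings into the holomorphic Lefschetz formula \eqref{eq:lhf}. With $p_{\frac16(3,4)}=0$, $l^{[1]}=0$ and $g_M=g(C_0)$ it becomes $2p_{\frac16(2,5)}+6\,g(C_0)=12$, forcing $g(C_0)\le 2$, and $g(C_0)=2$ gives $p_{\frac16(2,5)}=0$, i.e. $X^{[1]}=C_0$ and nothing else. So everything reduces to excluding this single rigid configuration. Here $C_0$ is a genus $2$ fixed curve of the order $3$ automorphism $\zeta^2$; running the order $3$ holomorphic and topological Lefschetz formulas of \cite{AS08} for $\zeta^2$ shows that its invariant lattice has rank $6$ and that $X^{[2]}=C_0\sqcup\{P_1,P_2\}$, with two isolated points of type $\frac13(2,2)$. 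Since $\zeta$ restricts to $X^{[2]}$ as an involution whose fixed locus is $X^{[1]}=C_0$, and since $p_{\frac16(2,5)}=0$ means neither $P_i$ is $\zeta$-fixed, $\zeta$ must fix $C_0$ pointwise and interchange $P_1$ and $P_2$.

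Excluding this last configuration is the step I expect to be the main obstacle, precisely because the usual numerical tools are exhausted: for a primitive order $6$ automorphism every term of the holomorphic Lefschetz formula is a real multiple of its Lefschetz number, so \eqref{eq:lhf} is the only relation it yields, and one checks that the topological Lefschetz numbers of $\zeta$, $\zeta^2$ and $\zeta^3$ are all satisfied by the forbidden case as well. I would therefore argue geometrically. Since $C_0^2=2$, the system $|C_0|$ presents $X$ as a double plane $\phi\colon X\to\IP^2$ branched over a sextic, with $C_0=\phi^{-1}(\ell_0)$ a double cover of a line $\ell_0$; as $\zeta$ preserves $|C_0|$ and fixes $C_0$ pointwise, it descends to an order $6$ automorphism of $\IP^2$ fixing $\ell_0$ pointwise, and it commutes with the covering involution $\theta$. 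Then $\zeta^2\theta$ is again a primitive non-symplectic automorphism of order $6$, but one that acts on $C_0$ as its hyperelliptic involution rather than trivially. Comparing these two possible actions against the period and $NS(X)$ data of $X$ should show that only the involution action is realised by an automorphism, so that $\zeta$ cannot fix a genus $2$ curve pointwise. This contradiction leaves $g(C_0)\le 1$.
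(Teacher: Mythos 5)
Your reduction is essentially the paper's own: you dispose of the $\zeta$-elliptic case via Lemma \ref{lem:msection}, use Corollary \ref{cor:ell} and Lemma \ref{lem:623} to force $l^{[1]}=p_{\frac16 (3,4)}=0$, and formula \eqref{eq:lhf} then isolates the single remaining configuration $g(C_0)=2$, $X^{[1]}=C_0$, $X^{[2]}=C_0\sqcup\{P_1,P_2\}$ with $\zeta$ interchanging $P_1$ and $P_2$. You even arrive at the same projective model the paper uses (the double plane $Y^2=F_6(X_0,X_1)+F_3(X_0,X_1)X_2^3+fX_2^6$ of \cite{AS08}, with $C_0$ the double cover of the line $X_2=0$). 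Up to this point everything is correct.

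The gap is the final exclusion, which you yourself flag as the main obstacle and then leave unproved. Replacing $\zeta$ by $\zeta^2\theta$ and ``comparing the two possible actions against the period and $NS(X)$ data'' is not an argument, and its underlying logic fails: if $\zeta$ existed, then $\zeta^2\theta$ would exist as well, since both are compositions of automorphisms of $X$; there is no exclusivity between the two actions that lattice or period data could detect, so ``only one is realised'' cannot be the source of a contradiction. The contradiction has to come out of $\zeta$ itself, and from where you stand it is two lines away --- and it is exactly how the paper concludes. The points $P_1,P_2$ form the single fiber of $\phi$ over $(0:0:1)\in\IP^2$ and are exchanged by the covering involution $\theta$. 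Since $\zeta$ commutes with $\phi$ and must swap $P_1$ and $P_2$ (you showed $p_{\frac16 (2,5)}=0$), its expression in coordinates must send $Y\mapsto -Y$ up to scalar, i.e.\ $\zeta$ exchanges the two sheets of the double cover. But restricted to $C_0=\{X_2=0\}$ the sheet exchange is precisely the hyperelliptic involution, which fixes only the six Weierstrass points $Y=0$ --- contradicting the assumption that $\zeta$ fixes $C_0$ pointwise. (Equivalently: pointwise fixing of $C_0$ forces the lift to have $Y\mapsto Y$, whence $\zeta$ fixes $P_1$ and $P_2$, contradicting $p_{\frac16 (2,5)}=0$.) With this step inserted your proof closes and coincides with the paper's.
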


\begin{proof}
If $g(C_0)\geq 2$, equation \ref{eq:lhf} implies that $l^{[1]}\geq 0$. Let us consider separately the cases when it is  strictly positive and when it is null.

\begin{enumerate}
\item If $l^{[1]}>0$, Corollary \ref{cor:ell} implies that $X$ is $\zeta$-elliptic and $C_0$ cannot have a strictly positive genus by Lemma \ref{lem:msection}.

\item If $l^{[1]}=0$ then equation \ref{eq:lhf} implies that there are no isolated fixed points and that the genus of $C_0$ is $2$. Since $\zeta$ acts as an involution on $X^{[2]}$, the fixed locus of $\zeta^2$ must contain an even number of fixed points and fixed rational curves. A close look at \cite[Table 1]{AS08} tells us that there are only two options:  $(p_{\frac13(2,2)},l^{[2]})$ is either $(2,0)$ or $(4,2)$. In the second case, $X$ is $\zeta$-elliptic, and that case is thus excluded by Lemma \ref{lem:msection}.
By \cite{AS08}, the case $(2,0)$ corresponds to $(X,\zeta^2)$ being isomorphic to a double cover of $\IP^2$ branched along a smooth sextic:
$$\begin{cases}
X: Y^2=F_6(X_0,X_1)+F_3(X_0,X_1)X_2^3+fX_2^6 \\ 
\zeta^2: (X_0:\ldots:X_2,Y)\mapsto(X_0:X_1:\xi^2 X_2,Y)
\end{cases}.$$
The fixed locus of $\zeta^2$ consists of the genus $2$ curve $C_0:Y^2=F_6(X_0,X_1)$ and of the two points over the origin $P_{1,2}: Y^2=fX_2^6$. 
Since $\zeta$ fixes no isolated points, $\zeta$ must permute the two "sheets" of $X$. However, this involution acts non-trivially on $C_0$ and it only fixes the $6$ points of intersection of the curve with the plane $Y=0$, which is a contradiction.
\end{enumerate}
\end{proof}


\section{Triage}
\label{sec:tri}

In this section we show that except for $8$ possible configurations of $(X^{[1]},X^{[2]})$, all other cases are $\zeta$-elliptic, in the sense of Definition \ref{def:zeta}. 

\begin{proposition}
If the fixed locus of $\zeta$ contains a rational curve then $X$ is $\zeta$-elliptic.
\end{proposition}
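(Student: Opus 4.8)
The plan is to reduce the statement to Corollary \ref{cor:ell}, which already upgrades ``$X^{[2]}$ contains two distinct curves'' (together with several sufficient conditions on $X^{[1]}$) to $\zeta$-ellipticity; so the whole task is to manufacture a second fixed curve for $\zeta^2$. Writing $X^{[1]}=C_0\sqcup\dots\sqcup C_m\sqcup\{\text{points}\}$ as in Lemma \ref{lem:hod}, I would first dispose of the configurations already covered by the hypotheses of Corollary \ref{cor:ell}: if the rational curve is accompanied by a second fixed curve ($m\geq 1$), or by an isolated fixed point of type $\frac16(3,4)$, then $X$ is $\zeta$-elliptic immediately. This leaves only the case in which the rational curve $C_0$ is the unique curve of $X^{[1]}$ and $p_{\frac16(3,4)}=0$; here $l^{[1]}=1$, $g^{[1]}=0$ and hence $g_M=1$.

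In this remaining case I would feed these values into the holomorphic Lefschetz relation \eqref{eq:lhf}, which collapses to $2p_{\frac16(2,5)}=12$, so $p_{\frac16(2,5)}=6$. By Lemma \ref{lem:623} each of these six points is isolated in $X^{[2]}$, whence the order-$3$ automorphism $\zeta^2$ has $n=p_{\frac13(2,2)}\geq 6$ isolated fixed points. Now I would invoke \cite{AS08}: the holomorphic Lefschetz formula for an order-$3$ non-symplectic automorphism reads $n=3+\sum_{C\subset X^{[2]}}\bigl(1-g(C)\bigr)$, and since each summand is at most $1$, this forces the number of fixed curves of $\zeta^2$ to be at least $n-3\geq 3$. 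In particular $X^{[2]}$ contains two distinct curves, and Corollary \ref{cor:ell} gives that $X$ is $\zeta$-elliptic. Equivalently, one simply checks that no line of \cite[Table 1]{AS08} carries $n\geq 6$ together with a single fixed curve.

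The only real obstacle is this last step: converting the lower bound $n\geq 6$ on isolated points into a lower bound on the number of fixed curves of $\zeta^2$. Everything hinges on the order-$3$ identity $n-\sum_C(1-g(C))=3$. If I did not wish to cite it from \cite{AS08}, I would re-derive it by evaluating the holomorphic Lefschetz number $1+\zeta_3^2$ of $\zeta^2$ against the local contributions, namely $\frac{1}{(1-\zeta_3^2)^2}$ at each isolated point of type $\frac13(2,2)$ and $\frac{(1-g)\zeta_3}{3}$ at each fixed curve of genus $g$ (using $C^2=2g-2$ on a K3); collecting real and imaginary parts, both reduce to $n-\sum_C(1-g(C))=3$. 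The easy configurations require no computation, and the single point I would be careful about is confirming that the ``curve plus a $\frac16(3,4)$-point'' situation is literally one of the sufficient conditions listed in Corollary \ref{cor:ell} — which it is.
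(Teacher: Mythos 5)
Your proof is correct, and its skeleton matches the paper's: both split off the configurations already covered by Corollary \ref{cor:ell} (a $\frac16(3,4)$-point alongside the curve, or two curves), then use the holomorphic Lefschetz relation \eqref{eq:lhf} together with Lemma \ref{lem:623} to conclude that $\zeta^2$ has $n\geq 6$ isolated fixed points in the remaining case. Where you genuinely diverge is the final, decisive step. The paper converts ``$n\geq 6$ and $X^{[2]}$ contains a rational curve'' into ellipticity by consulting the classification table of \cite{AS08} (their Table 2), i.e.\ by inspection of the already-known list of order-$3$ fixed loci. You instead derive it intrinsically: from the order-$3$ holomorphic Lefschetz identity $n=3+\sum_{C\subset X^{[2]}}(1-g(C))$, each curve contributes at most $1$, so the number $k$ of fixed curves of $\zeta^2$ satisfies $k\geq n-3\geq 3$, and Corollary \ref{cor:ell} applies. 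Your derivation of that identity (Lefschetz number $1+\zeta_3^2$, point contribution $1/(1-\zeta_3^2)^2=-\zeta_3/3$, curve contribution $(1-g)\zeta_3/3$ using $C^2=2g-2$) checks out. What each approach buys: the paper's table look-up is instantaneous once the classification is accepted, and implicitly absorbs parity constraints; your argument is self-contained modulo Corollary \ref{cor:ell}, avoids any table, yields the quantitative bound $k\geq n-3$ (stronger than the needed $k\geq 2$), and incidentally sidesteps the paper's slightly garbled claim that $n$ is ``an odd number greater or equal to $6$'', which is not needed for the conclusion.
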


\begin{proof}
When the fixed locus contains at least one curve, formula \ref{eq:lhf} reduces to $p_{\frac16 (3,4)}+2p_{\frac16 (2,5)}-6l^{[1]}=6$. If $p_{\frac16 (3,4)}$ is strictly positive, then Corollary \ref{cor:ell} implies that $X$ is elliptic. Otherwise, $p_{(2,5)}\geq 6$ and thus $n$ is an odd number at greater or equal to $6$. From \cite[Table 2]{AS08} one can see that all cases where $X^{[2]}$ contains a rational curve and where $n$ is an odd number larger than $6$ are $\zeta$-elliptic.
\end{proof}

\begin{proposition}
\label{prop:nell}
If the fixed locus of $\zeta$ contains only points, then $X$ is elliptic except for the possible following $8$ cases:
$\left( p_{\frac16 (3,4)}, p_{\frac16 (2,5)}; n, k, g \right)$ 
$\in$ $\left\{ (6,0;0,1,4),\right.$  
$(6,0;2,1,2),$ $\left.(4,1;1,1,3),\right.$ $(4,1;3,1,1),$ $(2,2;2,1,2),$ $(2,2;4,1,0),$ $(0,3;3,0,\emptyset),$ $\left.(0,3;3,1,1) \right\}.$
\end{proposition}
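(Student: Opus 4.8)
The plan is to reduce the statement to the Artebani--Sarti classification of the order-$3$ locus $X^{[2]}$, together with the constraint that $\zeta$ acts on $X^{[2]}$ as an involution. Since by hypothesis $X^{[1]}$ contains no curves, we have $l^{[1]}=0$ and $g_M=1$, so formula \eqref{eq:lhf} collapses to $p_{\frac16(3,4)}+2p_{\frac16(2,5)}=6$. The nonnegative solutions are exactly $(p_{\frac16(3,4)},p_{\frac16(2,5)})\in\{(6,0),(4,1),(2,2),(0,3)\}$, which already fixes the first two coordinates of each of the eight claimed tuples. It remains to determine, for each pair, the admissible data $(n,k,g)$ of $X^{[2]}$.

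The key observation is that $\zeta$ restricts to an involution on $X^{[2]}$ (because $\zeta^2=\id$ there) whose fixed locus is precisely $X^{[1]}\subseteq X^{[2]}$. If $X^{[2]}$ contains two or more curves then Corollary \ref{cor:ell} already gives that $X$ is $\zeta$-elliptic, so I may assume $k\leq 1$. By Lemma \ref{lem:623}, the $p_{\frac16(2,5)}$ points of type $\frac16(2,5)$ are exactly the isolated fixed points of this involution among the $n$ isolated points of $X^{[2]}$, whence $n\geq p_{\frac16(2,5)}$ and the remaining $n-p_{\frac16(2,5)}$ isolated points are exchanged in pairs, forcing $n\equiv p_{\frac16(2,5)}\pmod 2$. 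The $p_{\frac16(3,4)}$ points of type $\frac16(3,4)$ are the fixed points of $\zeta$ lying on the curve(s) of $X^{[2]}$; thus if $p_{\frac16(3,4)}>0$ we must have $k=1$, the curve $C$ is $\zeta$-invariant, and $\zeta|_C$ is an involution with exactly $p_{\frac16(3,4)}$ fixed points, so Riemann--Hurwitz determines the parity of the genus $g$ of $C$ (even when $p_{\frac16(3,4)}\in\{2,6\}$, odd when $p_{\frac16(3,4)}=4$, and $g$ odd for a fixed-point-free $\zeta|_C$ in the $(0,3)$ case with $k=1$).

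Finally, I would intersect these numerical restrictions with the finite list of fixed-locus data $(n,k,g)$ realized by non-symplectic automorphisms of order $3$ in the tables of \cite{AS08}, keeping only the rows with $k\leq 1$. Matched pair by pair with the four values of $(p_{\frac16(3,4)},p_{\frac16(2,5)})$ and the parity and lower-bound conditions on $n$, the surviving rows are exactly the eight tuples of the statement. I expect this last cross-check to be the main obstacle: it is finite but delicate, since one must verify that the parity and Riemann--Hurwitz conditions discard every spurious candidate (for example the odd-genus rows when $p_{\frac16(3,4)}=6$), confirm that each retained configuration is genuinely realized by a compatible involution rather than merely being numerically consistent, and be sure that no order-$3$ row with $k\leq 1$ compatible with \eqref{eq:lhf} has been overlooked --- which is precisely why the statement hedges by listing these only as the possible non-elliptic cases.
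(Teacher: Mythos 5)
Your strategy is the same as the paper's: the holomorphic Lefschetz relation gives the four pairs $(p_{\frac16(3,4)},p_{\frac16(2,5)})\in\{(6,0),(4,1),(2,2),(0,3)\}$, these are crossed against the finitely many non-elliptic fixed-locus data $(n,k,g)$ from \cite[Table 2]{AS08}, and candidates are pruned using the involution $\zeta|_{X^{[2]}}$: the bound $n\geq p_{\frac16(2,5)}$, the parity $n\equiv p_{\frac16(2,5)}\pmod 2$, and the fact that type $\frac16(3,4)$ points must lie on a curve of $X^{[2]}$ (which kills $(4,1;3,0,\emptyset)$). But your genus constraint has a genuine gap: from Riemann--Hurwitz you extract only the \emph{parity} of $g$, and parity is not enough. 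The combination $(6,0;4,1,0)$ passes every test you state: $n=4\geq 0=p_{\frac16(2,5)}$, $4\equiv 0\pmod 2$, $k=1$ so the six type $\frac16(3,4)$ points can sit on the curve, and $g=0$ is even, which is exactly the parity your rule assigns to $p_{\frac16(3,4)}=6$. Since $(6,0;4,1,0)$ is not among the eight tuples, your sieve leaves nine candidates, not eight, and the proposition as stated is not proved.

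The repair is to use the full Riemann--Hurwitz relation rather than its parity consequence: an involution on a curve of genus $g$ with quotient of genus $g'$ has $r=2g+2-4g'$ fixed points, hence $r\leq 2g+2$; for $g=0$ this forces $r=2$, so a rational curve cannot carry six fixed points of an involution. This one-line observation is precisely how the paper disposes of $(6,0;4,1,0)$. Two smaller points: your starting list should be the non-elliptic rows of \cite[Table 2]{AS08}, not all rows with $k\leq 1$ (the latter may contain elliptic cases; this does not endanger the proposition, which only bounds the non-elliptic possibilities, but it would falsify your claim that the survivors are ``exactly'' the eight tuples); and the finite cross-check you defer as ``the main obstacle'' is the entire content of the paper's count $24-6-8-1-1=8$ --- with the corrected genus bound it goes through and yields exactly the stated list.
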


\begin{proof}
From the holomorphic Lefschetz formula \ref{eq:lhf}, the points in $X^{[1]}$ satisfy $p_{\frac16 (3,4)}+ 2 p_{\frac16 (2,5)}=6$, i.e. $(p_{\frac16 (3,4)}, p_{\frac16 (2,5)}) \in \left\{ (6,0), (4,1), (2,2), (0,3)\right\}$.
On the other hand, \cite[Table 2]{AS08} tells us that there are only $6$ possibilities for $(n,k,g)$ with $X$ non-elliptic:
$$\left\{ (0,1,4), (1,1,3), (2,1,2), (3,0,\emptyset), (3,1,1), (4,1,0) \right\}.$$  A priori, there are thus $24$ combinations of $X^{[1]}$ and $X^{[2]}$ to analyze.\\
In lemma \ref{lem:623} we saw that $n\geq p_{\frac16 (2,5)}$. This excludes $6$ possibilities.  
Since $\zeta$ acts as an involution on $X^{[2]}$, $n$ is congruent to $p_{\frac16 (2,5)}$ modulo $2$. This excludes an additional $8$ combinations. Since points of type $\frac16 (3,4)$ lie on fixed curve in $X^{[2]}$, the combination $(4,1;3,0,\emptyset)$ is impossible. Finally, an involution on a rational curve has only $2$ fixed points, hereby excluding the case $(6,0;4,1,0)$.
The listed combinations are the $24-6-8-1-1=8$ remaining possibilities.
\end{proof}

\begin{conclusion}
The fixed locus of $\zeta$ consists in a disjoint union of smooth rational curves and points and possibly one elliptic curve. If there is an elliptic curve then the action is essentially unique, and will be described in \ref{prop:g1}. Section \ref{sec:pts} will analyze the $8$ cases of Proposition \ref{prop:nell}, while all other cases, which correspond to $X$ elliptic, will be discussed in section \ref{sec:ell}. 
\end{conclusion}


\section{The case where the fixed locus contains a genus 1 curve}
\label{sec:g1}

\begin{lemma}
If $g(C_0)=1$ then $p_{\frac16 (3,4)}=l^{[1]}=0$, $p_{\frac16(2,5)}=3$ and $l^{[2]}=0$, $p_{\frac13(2,2)}=3$.
\end{lemma}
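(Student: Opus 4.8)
The plan is to exploit the constraint equations established earlier together with the relationship between $X^{[1]}$ and $X^{[2]}$ coming from Lemma \ref{lem:623}. First I would apply the holomorphic Lefschetz formula \eqref{eq:lhf}. Since $C_0$ is a genus $1$ curve, we have $g_M = \max\{1, g^{[1]}\} = 1$, so the formula reads $p_{\frac16(3,4)} + 2p_{\frac16(2,5)} - 6l^{[1]} + 6 = 12$, that is,
\begin{equation*}
p_{\frac16(3,4)} + 2p_{\frac16(2,5)} - 6l^{[1]} = 6.
\end{equation*}
The key observation is that $C_0$ has self-intersection $0$ (being an elliptic curve on a K3), which by the argument in Lemma \ref{lem:hod} induces an elliptic fibration. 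The crux is to rule out any additional curves: if $X^{[1]}$ contained a rational curve or a point of type $\frac16(3,4)$, then by Corollary \ref{cor:ell} the surface $X$ would be $\zeta$-elliptic, and Lemma \ref{lem:msection} forbids $X^{[i]}$ from containing curves of strictly positive genus, contradicting the presence of $C_0$.

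This immediately forces $l^{[1]} = 0$ and $p_{\frac16(3,4)} = 0$. Substituting into the reduced Lefschetz relation gives $2p_{\frac16(2,5)} = 6$, hence $p_{\frac16(2,5)} = 3$. This settles the $X^{[1]}$ side of the claim. For the $X^{[2]}$ side, I would transfer this information using Lemma \ref{lem:623}: points of type $\frac16(2,5)$ are isolated in $X^{[2]}$, so $p_{\frac13(2,2)} \geq p_{\frac16(2,5)} = 3$, and the genus $1$ curve $C_0$ is of course fixed by $\zeta^2$ as well.

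To pin down $l^{[2]} = 0$ and $p_{\frac13(2,2)} = 3$ exactly, I would invoke the classification of Artebani--Sarti. Since $X^{[2]} = X^{\xi}$ contains a curve of positive genus (namely $C_0$) and we have just shown $X^{[1]}$ contains no rational curve, Corollary \ref{cor:ell} tells us that if $X^{[2]}$ contained two or more curves then $X$ would be $\zeta$-elliptic, again contradicting Lemma \ref{lem:msection} via the genus $1$ curve. Hence $l^{[2]} = 0$, so $C_0$ is the unique curve in $X^{[2]}$. The main obstacle is then matching against \cite[Table 1]{AS08}: I expect the configuration with a single genus $1$ curve and $l^{[2]}=0$ to correspond to a unique entry with exactly $p_{\frac13(2,2)} = 3$ isolated fixed points, consistent with the lower bound already obtained. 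Verifying that this is the correct and only row — and reconciling it with the fact that a genus $1$ curve for an order $3$ automorphism behaves differently from the generic positive-genus case in Proposition \ref{prop:hell} — is the delicate point, but once the row is identified the equalities $l^{[2]}=0$ and $p_{\frac13(2,2)}=3$ follow directly.
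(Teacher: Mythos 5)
Your proof is correct and follows essentially the same route as the paper: rule out $l^{[1]}>0$ and $p_{\frac16 (3,4)}>0$ (and likewise $l^{[2]}>0$) via Corollary \ref{cor:ell} combined with Lemma \ref{lem:msection}, extract $p_{\frac16 (2,5)}=3$ from formula \ref{eq:lhf} with $g_M=1$, and read $p_{\frac13 (2,2)}=3$ off \cite[Table 1]{AS08}. Your closing worry about reconciling with Proposition \ref{prop:hell} is unnecessary, since that proposition concerns the $\zeta$-elliptic (Weierstrass) situation, which is precisely what has been excluded here.
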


\begin{proof}
If $l^{[1]}$ or $p_{\frac16 (3,4)}$ were to be strictly positive, Corollary \ref{cor:ell} would  imply that $X$ is $\zeta$-elliptic contradicting Lemma \ref{lem:msection}. Formula \ref{eq:lhf} gives us the value of $p_{\frac16 (2,5)}$. \\
Similarly, the case $l^{[2]}>0$ is excluded as we would reach a similar contradiction. Finally, the value for $p_{\frac13 (2,2)}=3$ can be found in \cite[Table 1]{AS08}.
\end{proof}

\begin{proposition}
\label{prop:g1}
If the fixed locus contains an elliptic curve, then the action is defined uniquely, i.e. the fixed loci of $\zeta$, $\zeta^2$ and $\zeta^3$ are determined uniquely: $X^{[1]}=X^{[2]}$ consists in the elliptic curve and three isolated points, while the fixed locus of $\zeta^3$ contains the elliptic curve and another elliptic curve containing the above $3$ points.
\end{proposition}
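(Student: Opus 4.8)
The plan is to first pin down $X^{[2]}$ and then analyze the involution $\zeta^3$. By the previous lemma we already have $p_{\frac16(3,4)}=l^{[1]}=0$, $p_{\frac16(2,5)}=3$ and $l^{[2]}=0$, $p_{\frac13(2,2)}=3$, so $X^{[1]}$ is the elliptic curve $C_0$ together with three isolated points $P_1,P_2,P_3$ of type $\frac16(2,5)$. Since $X^{[1]}\subseteq X^{[2]}$, it only remains to rule out extra components of $X^{[2]}$. As $l^{[2]}=0$ there are no rational curves, and $C_0$ is the only positive-genus curve, since a second curve in $X^{[2]}$ would make $X$ be $\zeta$-elliptic by Corollary \ref{cor:ell}, contradicting Lemma \ref{lem:msection}. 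Finally, Lemma \ref{lem:623} shows the three points $P_i$ are isolated in $X^{[2]}$, and since $p_{\frac13(2,2)}=3$ they exhaust the isolated points; hence $X^{[2]}=C_0\sqcup\{P_1,P_2,P_3\}=X^{[1]}$.

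Next I would turn to $\zeta^3$, which is a non-symplectic involution since $\chi(\zeta^3)=\xi_6^3=-1$, so by \cite{Ni81} its fixed locus $X^{[3]}$ is a disjoint union of smooth curves with no isolated points. Cubing the local matrix $\mathrm{diag}(\xi_6^2,\xi_6^5)$ at a point of type $\frac16(2,5)$ gives $\mathrm{diag}(1,-1)$, so each $P_i$ lies on a fixed curve of $\zeta^3$; moreover $C_0\subseteq X^{[1]}\subseteq X^{[3]}$ is one of the components. Because the $P_i$ are isolated in $X^{[1]}$ and thus disjoint from $C_0$, they must lie on fixed curves of $\zeta^3$ distinct from $C_0$.

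The heart of the argument is to identify $X^{[3]}$ using the induced order-$3$ action of $\zeta^2$. Since $\zeta^2$ commutes with $\zeta^3$ it acts on $X^{[3]}$, and its fixed points there are exactly $X^{[2]}\cap X^{[3]}=X^{[1]}=C_0\sqcup\{P_1,P_2,P_3\}$, with $C_0$ pointwise fixed. By Nikulin's classification of non-symplectic involutions \cite{Ni81}, $X^{[3]}$ is either one curve of genus $g$ together with a collection of rational curves, or the exceptional configuration consisting of two disjoint elliptic curves. I would exclude the first alternative by a parity count: there $C_0$, being the unique non-rational fixed component, must be the genus-$g$ curve, so all three $P_i$ lie on the rational components; but an order-$3$ automorphism fixes $0$ or $2$ points on a $\IP^1$ and none on a freely permuted triple of rational curves, whence the number of $\zeta^2$-fixed points off $C_0$ is even, contradicting $3$. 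Therefore $X^{[3]}$ is the exceptional pair $C_0\sqcup D$ of elliptic curves. As the $P_i$ avoid $C_0$ they lie on $D$, and since an order-$3$ automorphism of an elliptic curve has either $0$ or $3$ fixed points, $\zeta^2$ fixes precisely $P_1,P_2,P_3$ on $D$, which gives exactly the asserted description of the three fixed loci.

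The main obstacle is the dichotomy in the third paragraph: everything hinges on excluding the generic Nikulin case, and the clean route is the parity of $\zeta^2$-fixed points off $C_0$, which forces an odd number onto components that can only supply an even count. The point demanding care is that, in the non-exceptional case, no component other than $C_0$ can be non-rational, so that all three $P_i$ are genuinely confined to rational curves before the contradiction is drawn; this is exactly what Nikulin's list guarantees.
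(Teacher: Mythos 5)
Your proof is correct, but it follows a genuinely different route from the paper's. The paper argues through the elliptic fibration $|C_0|$: since $C_0$ is fixed pointwise, the induced action on the base has order $6$ and exactly two fixed points; as $\chi(X)=24$, the fiber over the second fixed point has Euler number divisible by $6$, so by Kodaira's classification it is of type $I_{6N}$ or $I^*_{6N}$, and since $l^{[1]}=0$ the local analysis of Section \ref{subsec:loc} leaves only the smooth case $I_0$; the three fixed loci are then read off from the action of $\zeta$ on that smooth fiber, which is the second elliptic curve. You instead apply Nikulin's trichotomy for non-symplectic involutions to $\zeta^3$ (legitimate, since $\chi(\zeta^3)=-1$) and kill the generic case (one genus-$g$ curve plus rational curves) by parity: the $\zeta^2$-fixed points of $X^{[3]}$ lying off $C_0$ are exactly $P_1,P_2,P_3$, an odd number, while $\zeta^2$-invariant rational components can only supply an even number. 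Each approach buys something: the paper's exhibits the second elliptic curve concretely as a fiber of $|C_0|$ and stays within the fibration framework used throughout the paper, while yours bypasses the Euler-characteristic/Kodaira analysis and the forward reference to Section \ref{subsec:loc}, at the cost of invoking Nikulin's classification (which the paper cites anyway); yours also spells out the equality $X^{[1]}=X^{[2]}$ and the placement of the $P_i$ on the second curve, steps the paper compresses into ``readily found.'' One detail you should state explicitly in the parity step: a $\zeta^2$-invariant rational component of $X^{[3]}$ cannot be pointwise fixed by $\zeta^2$ --- that would contradict $l^{[2]}=0$, i.e.\ $X^{[2]}=X^{[1]}$ --- so the restriction is a nontrivial finite-order automorphism of $\IP^1$ and has exactly two fixed points; your ``$0$ or $2$'' elides the identity case, which is the only way an invariant component could contribute other than two points.
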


\begin{proof}
Consider the elliptic fibration given by the linear system $|C_0|$. Since $C_0$ is in the fixed locus, the induced action on the base is of order $6$, i.e. it is a cyclic action with two fixed points: the image of $C_0$ and some additional point $Q$. Since the Euler characteristic of a K3 surface is $24$, the Euler characteristic of the fiber above $Q$ is a multiple of $6$. From Kodaira's classification of the possible singular fibers, the fiber above $Q$ is of the type $I_{6N}$ or $I^*_{6N}$. However, as will follow from section \ref{subsec:loc}, only in the case $I_0$ does $\zeta$ not fix any rational curves. The fiber above $Q$ is thus smooth and the fixed loci of $\zeta$ and its powers are readily found.
\end{proof}

\begin{remark}
An example of a K3 with a primitive non-symplectic automorphism of order $6$ fixing an elliptic curve is given by the surface $y^2=x^3+(t^6-1)^2$, where the action is $\zeta:(x,y,t)\mapsto (x,y,\xi_6 t)$. The volume form $\omega=\frac{dx\wedge dt}{dy}$ gets mapped to $\zeta^* \omega=\xi_6 \omega$.
\end{remark}


\section{Elliptic case}
\label{sec:ell}

In this section we consider $X$ to be $\zeta$-elliptic. The induced automorphism, $\psi$, on $\IP^1$, is either trivial or an involution. The two cases are analyzed respectively in sections \ref{subsec:triv} and \ref{subsec:nontriv}. Our discussion begins in section \ref{subsec:loc} where we analyze how $\zeta$ acts on the fibers of $\pi$.

\subsection{Local analysis}
\label{subsec:loc}

Let $X$ be a K3 surface. The \emph{Gram graph} of $X$ is the incidence graph of the effective smooth rational curves on $X$. 
E.g., when the Picard lattice of $X$ is isomorphic to $U\oplus E_8^2$ of $S_X$, then the Gram graph is as in figure \ref{fig:gram}. Let $D$ be an effective divisor on $X$ and $\zeta$ an automorphism of $X$, we call $D$ \emph{stable} if $\zeta(D)=D$, and we say that $D$ is \emph{fixed} if $\zeta \vert_D=\id$. 

\begin{figure}[htp]
\centering
\includegraphics[height=0.6cm]{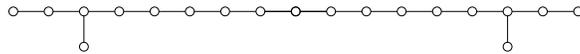}
\caption{Gram graph of $U\oplus E_8^2$.}\label{fig:gram}
\end{figure}

\begin{lemma} \label{lem:tree}
Consider a tree of rational curves on a surface $X$ which are stable componentwise under the action of a primitive non-symplectic automorphism of order $6$. Then, the points of intersection of the rational curves are fixed and the action at one fixed point determines the action on the whole tree. 
\end{lemma}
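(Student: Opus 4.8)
The plan is to analyze the situation point by point along the tree, using the linearization of $\zeta$ at each intersection point together with the fact that $\zeta$ has order $6$ and is primitive. First I would establish that the intersection points are fixed: if two rational curves $D_1$ and $D_2$ in the tree meet at a point $P$, then since each curve is stable ($\zeta(D_i)=D_i$) and they meet transversally at the single point $P$, the automorphism must send $P$ to a point lying on both $D_1$ and $D_2$; as $P$ is the unique such intersection, $\zeta(P)=P$, so $P$ is fixed. (More carefully, $\zeta$ permutes the finitely many pairwise intersection points among the componentwise-stable curves, but each such point is the unique intersection of its two curves, so the permutation is trivial.)

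Next I would use the local Cartan linearization at a fixed intersection point $P$ where two stable curves $D_1, D_2$ cross. In suitable coordinates the two curves are the coordinate axes, and $\zeta$ acts diagonally as $\mathrm{diag}(\xi_6^k, \xi_6^{k'})$ with $(k,k')$ as restricted in Section \ref{sec:notation}. Since $D_1$ and $D_2$ are each stable, the two eigendirections of $\zeta$ at $P$ must be precisely the tangent directions of $D_1$ and $D_2$. This means the eigenvalue $\xi_6^k$ records the action of $\zeta$ on (the normal/tangent direction along) $D_1$ near $P$, and $\xi_6^{k'}$ records it along $D_2$. Thus knowing the pair $(k,k')$ at $P$ fixes how $\zeta$ acts in a neighbourhood of $P$ on each of the two branches.

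The core of the argument is then a propagation step. I would show that the action along a single stable rational curve $D$ is rigid: the restriction $\zeta\vert_D$ is an automorphism of $D \cong \IP^1$, and the eigenvalue read off at one fixed point of $D$ determines $\zeta\vert_D$ completely. Indeed an automorphism of $\IP^1$ that fixes a point $P$ and acts on the tangent space there by a prescribed scalar $\xi_6^m$ (with the second fixed point being where the next curve of the tree attaches) is uniquely determined—a Möbius transformation is fixed by its value and derivative at a single point. Hence the action at $P$ determines the eigenvalue at the \emph{other} special point $P'$ of $D$ (the reciprocal multiplier, so the transverse exponent flips sign modulo $6$), which in turn feeds into the linearization at the next intersection point of the tree via the constraint that $(k,k')$ must lie in $\{(0,1),(2,5),(3,4)\}$. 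Walking outward from the initial vertex across the edges of the tree, each new intersection point has exactly one eigenvalue already pinned down (the one inherited along the curve just traversed), and the primitivity constraint on the admissible pairs forces the other eigenvalue, hence the entire local action. Because the graph is a tree, there are no cycles to create consistency obstructions, so the inductive propagation terminates with a uniquely determined action on every component.

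The main obstacle I expect is making the propagation statement fully precise: one must be careful that at each intersection point the ``inherited'' eigenvalue along the traversed curve is genuinely one of the two diagonal entries, and that the list $\{(0,1),(2,5),(3,4)\}$ then leaves no ambiguity for the second entry. Concretely, I would need to check that the transverse multiplier carried along a curve, when combined with the primitivity requirement that $\chi(\zeta)$ has order $6$, always admits a unique completion to an admissible pair—this is the content that upgrades ``consistency'' to ``uniqueness,'' and it is where the tree (acyclic) hypothesis is essential, since a loop could in principle impose an incompatible pair of constraints. I would verify this by a short finite case-check over the admissible $(k,k')$ and the possible inherited eigenvalues $\xi_6^{\pm m}$.
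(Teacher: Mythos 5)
Your proposal follows essentially the same route as the paper's proof: intersection points of componentwise-stable curves are fixed, the linearization at such a point has its eigendirections along the two branches, and the multiplier propagates through the tree (reciprocal eigenvalue at the far end of each rational curve, with the missing exponent recovered from the admissible list $\{(0,1),(2,5),(3,4)\}$, i.e.\ from $k+k'\equiv 1 \pmod 6$, all six exponents being distinct). One correction to your justification of the propagation step: the claim that ``a M\"obius transformation is fixed by its value and derivative at a single point'' is false in general --- the parabolic family $z\mapsto z/(cz+1)$ fixes $0$ with derivative $1$ for every $c$. What rescues the step is that $\zeta\vert_D$ has finite order, hence is semisimple in $\mathrm{PGL}_2(\IC)$: if its multiplier at a fixed point equals $1$ it is the identity on $D$, and otherwise it has exactly two fixed points with reciprocal multipliers; combined with the observation that any further attaching point of the tree on $D$, being fixed, must coincide with that second fixed point, this is exactly what your walk outward along the tree uses, and it is also the (implicit) content of the paper's ``$z\mapsto\lambda z$ in suitable coordinates'' normalization.
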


\begin{proof}
The key in this proof is to realize that the action of the automorphism on a given rational component and the action on a fixed point of this curve determine each other completely. Recall that an action of $\IC$ will be of the form $z\mapsto \lambda z$, $\lambda \in \IC^*$ under suitable coordinates. Now, $\lambda$ is nothing but the eigenvalue associated to the fixed point of coordinate $0$, or the inverse of the eigenvalue associated to the fixex point at infinity. Conversely, if one knows one eigenvalue of the automorphism localized at a point, then one knows the full action at that point. First, the eigendirections correspond to the components of the tree passing through the point Secund, since the three types of points, $\frac16(3,4)$, $\frac16(2,5)$ and $\frac16(1,0)$, all have distinct eigenvalues it is clear to which eigenvalue corresponds each direction.
\end{proof}

\begin{remark}
It follows from the proof of the previous lemma that if we look at the types of points of intersection on a chain of smooth rational curves, these will embed in the following periodic sequence:
$$ \ldots, \frac16(2,5), \frac16(3,4), \frac16(3,4), \frac16(2,5), \frac16(1,0), \frac16(1,0), \ldots$$
\end{remark}

\begin{example}
\label{ex:E6}
Consider a type $IV^*$ configuration of rational curves which is stable under the action of $\zeta$, a non-symplectic automorphism of order $6$. Moreover, assume that it contains on one of the weight $1$ curves, $L$, a point $P$ of type $\frac16 (3,4)$, such that the eigendirection corresponding to the eigenvalue $-1$ is transversal to the $L$. Using Lemma \ref{lem:tree} we can determine the action on the entire configuration. This action is illustrated in Figure \ref{fig:E6b}.
\end{example}

\begin{figure}[htp]
\centering
\psfrag{P}{$P$}
\psfrag{A}{$\frac16 (3,4)$}
\psfrag{B}{$\frac16 (2,5)$}
\psfrag{C}{$\frac16 (1,0)$}
\includegraphics[height=2cm]{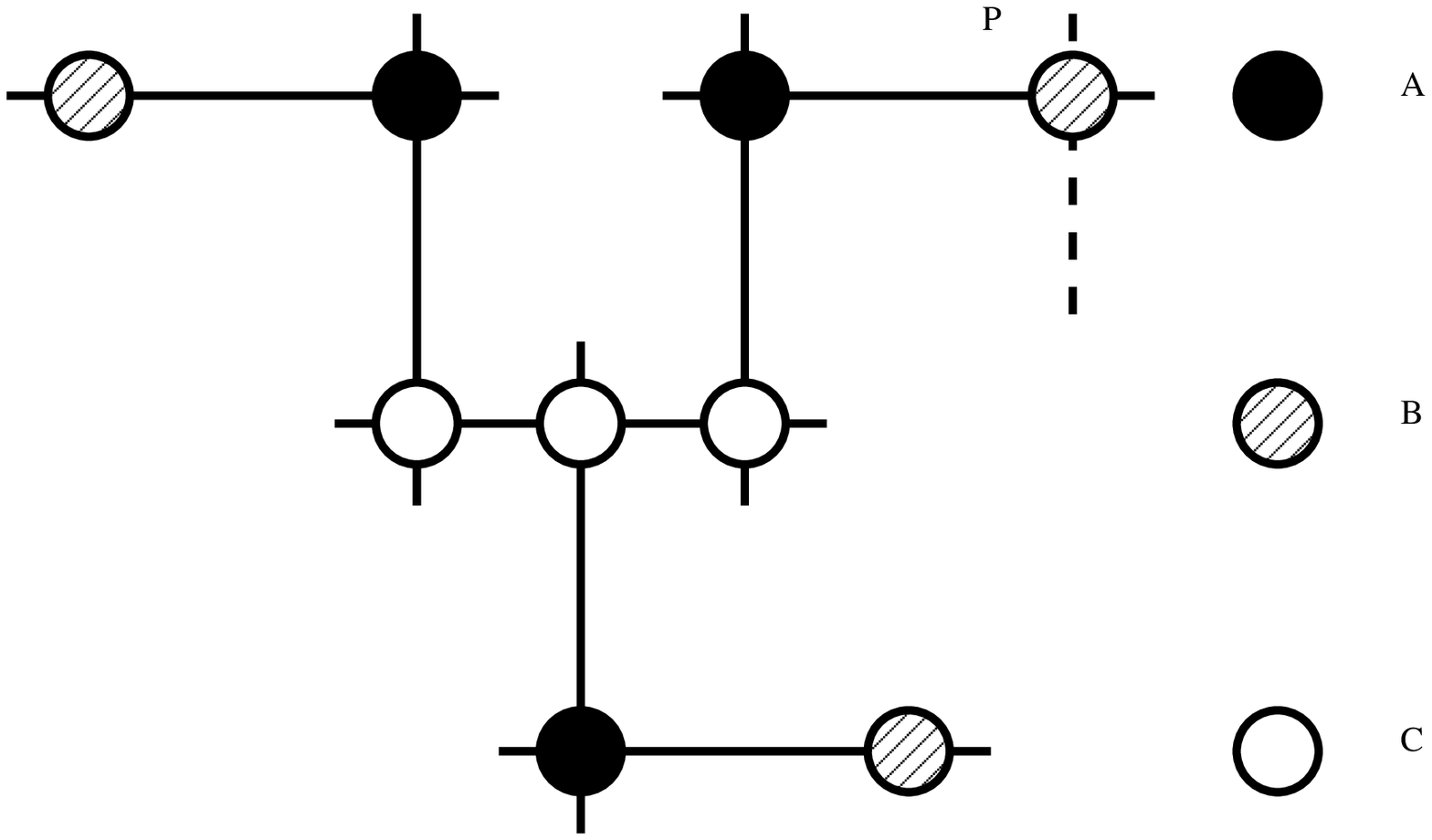}\\
\caption{Study of the action on a type $IV^*$ configuration.}\label{fig:E6b}
\end{figure}

From now on, we will focus only on fibers of type $I_0$, $II$, $II^*$, $IV$ and $IV^*$. We will denote by $ii$ the number of type $II$ fibers, $ii^*$ the number of type $II^*$ fibers, etc. We focus our attention on these fibers because of the following theorem: 

\begin{proposition}
\cite[Proposition 4.2]{AS08})
\label{prop:AS42}
Let $X$ be $\zeta$-elliptic, then the numbers $(n,k)$ determine uniquely $ii$, $ii^*$, $iv$ and $iv^*$.
More precisely, $\pi$ has
\begin{enumerate}
 \item $n$ type $IV$ fibers if $k=2$.
 \item $n-3$ type $IV$ fibers and $1$ type $IV^*$ fiber if $k=3$.
 \item $n-4$ type $IV$ fibers and $1$ type $II^*$ fiber if $k=4$.
 \item $n-7$ type $IV$ fibers, $1$ type $IV^*$ fiber and  and $1$ type $II^*$ fiber if $k=5$.
 \item $n-8$ type $IV$ and $2$ type $II^*$ fibers if $k=6$.
\end{enumerate}
\end{proposition}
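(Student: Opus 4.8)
The plan is to start from the classification of $X^{[2]}$ in \cite[Table 2]{AS08}, which records for each $\zeta$-elliptic action the triple $(n,k,g)$, together with the singular fibers of the Weierstrass fibration $y^2=x^3+p_{12}(t)$. The key observation is that the singular fibers of this fibration are dictated by the vanishing order of $p_{12}(t)$ at each point of $\IP^1$: by the standard Weierstrass table (Tate's algorithm, or Kodaira's list specialized to $j\equiv 0$), a zero of $p_{12}$ of order $m$ produces a fiber of type $II$, $IV$, $I_0^*$, $IV^*$, $II^*$, or smooth according to $m \bmod 6 \in \{1,2,3,4,5,0\}$. Since $\deg p_{12}=12$, the total vanishing order is $12$, which gives the fundamental constraint $ii + 2\,iv + 4\,iv^* + 5\,ii^* = 12$ (reading the contribution of each fiber type as its vanishing order, and discarding $I_0^*$ since those do not occur for generic $p_{12}$ in these configurations).

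**Relating the fiber data to $(n,k)$.** First I would recall from Theorem \ref{thm:asell} and its proof in \cite{AS08} how $n$ and $k$ are read off the fibration. The rational curves fixed by $\zeta^2$ are the central components of the additive fibers, and the isolated points of type $\frac13(2,2)$ arise from the type $IV$ fibers. Concretely, each $IV$ fiber contributes one isolated fixed point (its singular center), so $iv$ is essentially $n$ corrected by the contributions of the starred fibers; each $IV^*$ and each $II^*$ fiber contributes a fixed rational curve to $X^{[2]}$, hence to $k$. The cases (1)--(5) are then obtained by imposing that the number of fixed rational curves in $X^{[2]}$ equals $k$ and solving the linear bookkeeping: for $k=2$ no starred fiber is present, forcing all $n$ fixed points to come from $IV$ fibers, so $iv=n$ and then $ii=12-2n$; for $k=3$ one $IV^*$ appears (it supplies the extra fixed curve and absorbs $4$ units of vanishing plus $3$ of the fixed points), giving $iv=n-3$; and so on up through $k=6$, where two $II^*$ fibers appear.

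**Carrying out the count.** The second step is to verify, case by case, that the linear system in the unknowns $(ii, iv, iv^*, ii^*)$ has a unique nonnegative integer solution once $(n,k)$ is fixed. The relevant equations are the degree relation $ii + 2\,iv + 4\,iv^* + 5\,ii^* = 12$, the fixed-point count expressing $n$ in terms of $iv$ (and the starred fibers, each of which also carries one singular center of the appropriate type), and the fixed-curve count expressing $k$ in terms of $iv^*$ and $ii^*$. For each $k\in\{2,\dots,6\}$ the values of $iv^*$ and $ii^*$ are already pinned down by $k$ alone (namely $(iv^*,ii^*)=(0,0),(1,0),(0,1),(1,1),(0,2)$ respectively), and then $iv$ is determined by $n$ and $ii$ by the degree relation. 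This immediately yields the five formulas in the statement. I would present this as a short table or as five lines of arithmetic rather than grinding each one out.

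**Main obstacle.** The genuinely delicate point is the precise passage from the Weierstrass data to the invariants $n$ and $k$ of $X^{[2]}$ — i.e.\ correctly accounting for how many isolated fixed points and fixed rational curves each additive fiber type contributes under the order-$3$ action $(x,y,t)\mapsto(\zeta_3^2 x, y, t)$. This is exactly the local fiber analysis, and one must be careful that the starred fibers contribute both to the fixed-curve count and (through their reducible structure) possibly to the fixed-point count, so that the coefficients in the bookkeeping equations are right. Once that local dictionary is fixed, the remainder is elementary linear algebra over $\IZ_{\geq 0}$, and uniqueness follows because the system is triangular: $k$ determines the starred fibers, $n$ then determines $iv$, and the degree constraint determines $ii$.
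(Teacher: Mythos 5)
You should first note that the paper does not prove this statement at all: it is imported verbatim from \cite[Proposition 4.2]{AS08}, so your attempt has to be judged on its own merits rather than against an internal argument. Judged that way, it has a fatal gap, located exactly at the step you declare to be ``elementary linear algebra over $\IZ_{\geq 0}$.'' First, your local dictionary is wrong for $II^*$: under the order-$3$ action, a $II^*$ fiber contains \emph{two} pointwise fixed rational curves (the components of multiplicity $3$ and $6$) and four isolated fixed points, not ``a fixed rational curve''; with the dictionary as you state it, your own value $(iv^*,ii^*)=(0,1)$ for $k=4$ would give $k=3$, so your five cases are copied from the statement rather than derived. Second, and decisively, even with the correct dictionary ($IV$: one point; $IV^*$: one curve and three points; $II^*$: two curves and four points) the bookkeeping system
\begin{align*}
k &= 2 + iv^* + 2\,ii^*,\\
n &= iv + 3\,iv^* + 4\,ii^*,\\
12 &= ii + 2\,iv + 4\,iv^* + 5\,ii^*
\end{align*}
does \emph{not} have a unique nonnegative solution once $k\geq 4$: for $(n,k)=(6,4)$ both $(ii,iv,iv^*,ii^*)=(3,2,0,1)$ and $(4,0,2,0)$ solve it, for $k=5$ one also has $(iv^*,ii^*)=(3,0)$, and for $k=6$ also $(4,0)$ and $(2,1)$. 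So the system is not ``triangular,'' and uniqueness cannot follow from counting. Worse, the extra solutions are not phantoms that a cleverer count would kill: $y^2=x^3+t^4(t-1)^4q_4(t)$ with $q_4$ generic is an honest K3 surface, the automorphism $(x,y,t)\mapsto(\xi_3 x,-y,t)$ is primitive non-symplectic of order $6$ acting fiberwise, the fibration has two $IV^*$ fibers and four $II$ fibers, and its fixed-locus invariants are exactly $n=6$, $k=4$, $g=1$. (Your dismissal of $I_0^*$ fibers fails for the same reason: a triple root of $p_{12}$ gives an invariant fibration with an $I_0^*$ fiber on which the order-$3$ automorphism permutes three of the leaves, producing $(n,k)=(1,2)$ just as a single $IV$ fiber would.)

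The upshot is that no purely local, fiber-by-fiber count can prove the proposition, because distinct fiber configurations genuinely realize the same $(n,k)$. What rescues the statement is global lattice theory: in the example above the invariant lattice is $U\oplus E_6^2$, which is isometric to $U\oplus E_8\oplus A_2^2$ (the discriminant forms $(2/3,2/3)$ and $(4/3,4/3)$ on $(\IZ/3\IZ)^2$ are equivalent), so the very same surface carries a second invariant fibration with fibers $II^*+2\,IV+3\,II$ — the one listed in the proposition. In other words, the proposition concerns the particular fibration singled out by the construction in \cite{AS08} (equivalently, each pair $(X,\zeta)$ admits \emph{some} invariant fibration with the listed fibers), and proving that requires identifying the invariant lattice and the fibration classes inside it, not bookkeeping. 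Had you carried out your case check honestly, you would have discovered that the uniqueness you need is false as stated, which is the concrete signal that the missing global ingredient is where the real proof lives.
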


\begin{lemma}
\label{lem:loc}
Let $X$ be $\zeta$-elliptic, $\psi$ trivial, and assume that $X$ has a fiber of type $II$, $IV$, $II^*$ or $IV^*$. When restricted to those fibers, $\zeta$  
\begin{enumerate}
 \item fixes $1$ point of type $\frac16 (3,4)$, namely the cuspidal point of the fiber. (Fiber of type $II$)
 \item fixes $3$ points of type $\frac16 (3,4)$, $4$ points of type $\frac16 (2,5)$ and $1$ rational curve. (Fiber of type $II^*$)
 \item fixes $1$ point of type $\frac16 (2,5)$, namely the common intersection point. (Fiber of type $IV$)
 \item fixes $2$ points of type $\frac16 (3,4)$ and $1$ point of type $\frac16 (2,5)$. (Fiber of type $IV^*$)
\end{enumerate}
Moreover, $\pi$ has also a section fixed by $\zeta$ and it is the only part of $X^{[1]}$ not completely included in the fibers.
\end{lemma}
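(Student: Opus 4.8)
The plan is to use that, with $\psi$ trivial, Corollary \ref{cor:ell} forces $\zeta$ to restrict to an automorphism of order $6$ on every fiber, with $\zeta^2$ acting on the Weierstrass model $y^2=x^3+p_{12}(t)$ as $(x,y,t)\mapsto(\zeta_3^2 x,y,t)$. I would first dispose of the \emph{Moreover} clause. Since $\zeta$ preserves the zero section of the Weierstrass model (the point $[0:1:0]$ of each fiber is fixed), it fixes the origin $O$ of every smooth fiber $E$ and therefore acts on $E$ as a group automorphism of order $6$, i.e. as complex multiplication by a primitive sixth root of unity on the $j=0$ curve $E$; its fixed locus is $\ker(\xi_6-1)$, which has $N(\xi_6-1)=1$ point, namely $O$. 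Hence $\zeta$ fixes the zero section $\Sigma$ pointwise, and at a point of $\Sigma$ the tangent eigenvalue is $1$ while the transverse eigenvalue is a primitive sixth root, so these points are of type $\frac16(0,1)$. Any component of $X^{[1]}$ not contained in a fiber is a multisection meeting a general fiber in $\zeta$-fixed points, of which there is exactly one; so it is a single section, necessarily $\Sigma$, proving the last assertion.

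For the singular fibers the method rests on three tools: (i) the rule, which follows from Lemma \ref{lem:623}, that an isolated $\zeta$-fixed point is of type $\frac16(3,4)$ precisely when it lies on a $\zeta^2$-fixed curve, and of type $\frac16(2,5)$ otherwise (the two possibilities being mutually exclusive in $X^{[2]}$); (ii) Lemma \ref{lem:tree} together with the periodic sequence of intersection types along a stable chain of rational curves; and (iii) the seed provided by the intersection of $\Sigma$ with the identity component, which is of type $\frac16(0,1)$. Throughout I would read off the $\zeta^2$-fixed locus on each fiber from the classification in \cite{AS08}.

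Types $II$ and $IV$ are not normal-crossing trees, so I treat them by hand. On a type $II$ fiber the surface is smooth (the root of $p_{12}$ is simple), the cusp is the only isolated $\zeta$-fixed point, and it lies on the $\zeta^2$-fixed branch curve $\{x=0\}$; by (i) it is of type $\frac16(3,4)$, while $O$ lies on $\Sigma$. On a type $IV$ fiber the triple point is the unique point fixed by $\zeta^2$ on that fiber and is isolated in $X^{[2]}$ (a $\frac13(2,2)$ point); by (i) it is of type $\frac16(2,5)$. The remaining components of $IV$ are swapped by $\zeta$ (it acts through $\pm1$ on the component group $\mathbb{Z}/3$, and trivially would force extra fixed points on the branch curve that $\zeta$ does not fix), so no further isolated fixed point or fixed curve appears.

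For types $II^*$ ($\tilde E_8$) and $IV^*$ ($\tilde E_6$) I would propagate with Lemma \ref{lem:tree}. The $\tilde E_8$ graph has no nontrivial automorphism, so all nine components are stable; seeding at the multiplicity-$1$ end meeting $\Sigma$ and running the periodic sequence through the tree yields exactly one fixed component and the asserted $3+4$ isolated points. The case $\tilde E_6$ is where I expect the main obstacle: $\zeta$ need not act componentwise. Here $\zeta^2$ fixes every arm (the only order-$3$ automorphism of the $\tilde E_6$ graph fixing the section arm is the identity), but $\zeta^3$ may swap the two arms not meeting $\Sigma$. Propagating along the fixed arm $B_1-A_1-C_0$ gives a $\frac16(2,5)$ point at $B_1\cap A_1$ and a $\frac16(3,4)$ point at $A_1\cap C_0$; the swap then forces $\zeta|_{C_0}$ to be an involution with local eigenvalue $-1$ at $A_1\cap C_0$, whose second fixed point on $C_0$ is again of type $\frac16(3,4)$, giving two $\frac16(3,4)$ points, one $\frac16(2,5)$ point, and no fixed curve. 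The delicate step is to justify that this arm-swap is genuinely forced rather than a componentwise-fixed action (which would spuriously produce a fixed rational curve); this is where I would lean on the disjointness of the $\zeta^2$-fixed curves together with the requirement that $\zeta$ restrict to an order-$6$ map on each fiber.
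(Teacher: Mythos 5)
Your fiber-by-fiber analysis (seed at a fixed section, propagate with Lemma \ref{lem:tree}, and use Lemma \ref{lem:623} to sort $\frac16(3,4)$ from $\frac16(2,5)$ points) is essentially the paper's argument and gives the correct counts, but your treatment of the \emph{Moreover} clause contains a genuine error. You assert that $\zeta$ fixes the zero section because ``the point $[0:1:0]$ of each fiber is fixed.'' This is not automatic and is in fact false in general: an automorphism acting trivially on the base need not preserve the distinguished section of the Weierstrass model, since fiberwise it can be a \emph{translation} composed with a group automorphism. What is automatic is only that $\zeta(\sigma_0)$ is a section fixed by $\zeta^2$; when $p_{12}$ is a perfect square $q_6^2$, the curve $\{x=0\}$ splits into two sections $(0,\pm q)$, and composing the standard order-$6$ action $(x,y)\mapsto(\zeta_3 x,-y)$ with fiberwise translation by the $3$-torsion section $(0,q)$ yields a primitive non-symplectic automorphism of order $6$, trivial on the base, whose square is the standard order-$3$ action but which \emph{swaps} $\sigma_0$ with $(0,q)$ and fixes only the section $(0,-q)$. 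The paper's proof is built precisely around this dichotomy: either $\sigma_0$ is fixed, or $\zeta$ permutes $\sigma_0$ with another section $\sigma_1$, in which case the third fixed point of the order-$3$ automorphism $\zeta^2$ on each smooth fiber is necessarily $\zeta$-fixed and sweeps out a fixed section of type $\frac16(0,1)$. Since everything downstream in your proof only uses the existence of \emph{some} pointwise fixed section $\Sigma$, the repair is exactly this argument; but as written your first step claims something false, not merely something unproven.

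On the step you yourself flag as delicate (type $IV^*$): the arm-swap is indeed forced, and the justification is shorter than the tools you propose. If $\zeta$ stabilized every component, then the central curve $C_0$ would carry three $\zeta$-fixed points (its intersections with the three arms), so $\zeta\vert_{C_0}$ would be the identity; but your own propagation along the stable chain $\Sigma - B_1 - A_1 - C_0$ produces eigenvalue $-1$ along $C_0$ at $A_1\cap C_0$, a contradiction. (Equivalently: componentwise stability makes $C_0$ pointwise fixed, and propagating back out to $\Sigma$ then forces eigenvalue $-1$ along the section, contradicting that $\Sigma$ is fixed pointwise.) So no appeal to disjointness of $\zeta^2$-fixed curves is needed, only Lemma \ref{lem:tree} plus the fact that an automorphism of $\IP^1$ with three fixed points is the identity. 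With the section dichotomy repaired and this one-line argument inserted, your proof is correct and follows the same route as the paper's.
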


\begin{proof}

Since $\psi$ is trivial, the fibers are preserved by $\zeta$. Thus either $\zeta$ fixes the zero section $\sigma_0$, or there is another section $\sigma_1$ and $\zeta$ permutes the two. Assume $\sigma_0$ is not fixed. Pick a smooth fiber $F$ of $\pi$. The automorphism $\zeta^2$ is of order $3$ on $F$ and fixes the $2$ points of intersection with the two sections $\sigma_0$ and $\sigma_1$. Therefore, there is a third fixed point. Since $\zeta$ permutes the first two, it fixes the third one. Since $\psi$ is trivial, this point is of type $\frac16 (1,0)$ and there is a fixed section passing through that point.

Let us describe the action on the fibers explicitly.

\begin{enumerate}

\item $(II)$ The point of intersection with the fixed section is not the node, as the section intersects the fiber with multiplicity $1$, and is of type $\frac16(0,1)$. On the other hand, the other fixed point, which ought to be the node, is of type $\frac16(3,4)$. 

\item $(II^*)$ Since the Gram graph of this fiber has no non-trivial $\IZ/2\IZ$ automorphism, the curve of weight $6$ is fixed. The remaining fixed points can be found using Lemma \ref{lem:tree}.

\item $(IV)$ The section of the Weierstrass fibration is fixed and intersects the fiber at the curve of weight $1$. Using Lemma \ref{lem:tree} we see that there is a unique possible action, namely the one permuting the two other branches. 

\item $(IV^*)$ The action on the fiber follows from lemma \ref{lem:tree} and is described in figure \ref{fig:E6}. The black dot corresponds to a point of type $\frac16(2,5)$ and the two white dots to points of type $\frac16(3,4)$.

\begin{figure}[htp]
\centering
\psfrag{A}{$\frac16 (3,4)$}
\psfrag{B}{$\frac16 (2,5)$}
\psfrag{C}{$\frac16 (1,0)$}
\includegraphics[height=2cm]{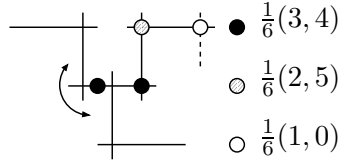}\\
\caption{Action on a type $IV^*$ fiber when $\psi$ is trivial.}\label{fig:E6}
\end{figure}

\end{enumerate}
\end{proof}

\begin{lemma}
\label{lem:loc2}
Let $X$ be $\zeta$-elliptic and assume $\psi$ is an involution.  Let $F$ be a fiber preserved by $\zeta$, i.e. $\zeta(F)=F$. $F$ is of type $I_0$, $IV$ or $IV^*$. Moreover, $\zeta$ 
\begin{enumerate}
 \item fixes $3$ points of type $\frac16 (3,4)$, when $F$ is smooth. 
 \item fixes $1$ point of type $\frac16 (3,4)$ and $1$ point of type $\frac16 (2,5)$; when $F$ is of type $IV$. This case is depicted in Example \ref{ex:E6}.
 \item fixes $3$ points of type $\frac16 (3,4)$, $3$ points of type $\frac16 (2,5)$ and $1$ rational curve, when $F$ is of type $IV^*$.
\end{enumerate}
Moreover, every component of $X^{[1]}$ lies in one of those fibers.
\end{lemma}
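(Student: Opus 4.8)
The plan is to work inside the Weierstrass model supplied by Corollary \ref{cor:ell}, writing $\zeta$ in the form $(x,y,t)\mapsto(a(t)x,b(t)y,\psi(t))$ preserving the zero section. Since $\psi$ is an involution of $\IP^1$ it has exactly two fixed points $t_0,t_\infty$, and $\zeta(F)=F$ holds precisely when $\pi(F)\in\{t_0,t_\infty\}$. I would first settle the final sentence: if $Z$ is a component of $X^{[1]}$ then for $z\in Z$ we have $\pi(z)=\pi(\zeta z)=\psi(\pi(z))$, so every point of $\pi(Z)$ is $\psi$-fixed; a nonconstant $\pi|_Z$ would make $\psi$ fix infinitely many points and force $\psi=\id$, so $\pi(Z)$ is a single point and $Z\subset F_{t_0}\cup F_{t_\infty}$. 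Thus every component of $X^{[1]}$ lies in one of the two preserved fibers.

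Next I would pin down the fiber types. From $\zeta^2=(\zeta_3^2 x,y,t)$ one gets $b(\psi(t))b(t)=1$, hence $b(t_0)^2=1$, while preservation of $y^2=x^3+p_{12}(t)$ yields the functional equation $b(t)^2p_{12}(t)=p_{12}(\psi(t))$. Comparing leading terms at $t_0$, where $\psi'(t_0)=-1$ because $\psi$ is an involution, shows that $p_{12}$ vanishes to \emph{even} order at $t_0$. For this Weierstrass fibration the vanishing orders $0,2,4,6$ give exactly the types $I_0$, $IV$, $IV^*$, whereas the odd orders $1,3,5$ (types $II$, $I_0^*$, $II^*$) are excluded. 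This is the cleanest step and gives the first assertion.

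The counts rest on one local principle: at a fixed point lying on a smooth point of $F$, one eigendirection is tangent to the fiber and the other maps isomorphically onto $T_{t_0}\IP^1$, where $\zeta$ acts by $\psi'(t_0)=-1=\xi_6^3$; since no eigenvalue of a type $\frac16(2,5)$ point equals $\xi_6^3$, such a point must be of type $\frac16(3,4)$, and type $\frac16(2,5)$ points can occur only at nodes of $F$. For $I_0$: since $\chi(\zeta)$ is a primitive sixth root of unity, $\zeta|_F=(\zeta_3 x,y)$ has order $3$ and fixes three smooth points of $F$, all of type $\frac16(3,4)$. For $IV$ (three components through a common point $Q$), the identity component is stable because the zero section meets it at a fixed point; the relation $(d\zeta)^2=d(\zeta^2)=\zeta_3^2\cdot\id$ at $Q$ forces the distinct eigenvalues $\{\xi_6^2,\xi_6^5\}$ (an equal pair would give an invalid type and fix all three components), so $\zeta$ swaps the other two components, making $Q$ of type $\frac16(2,5)$ and the section point of type $\frac16(3,4)$.

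The main work, and the step I expect to be hardest, is the $IV^*=\tilde E_6$ fiber. Here I would first show $\zeta$ fixes the central multiplicity-$3$ component $Z$ and stabilizes each of the three arms $Z-M_i-E_i$ individually: a cyclic permutation of the arms would make $\zeta|_Z$ a rotation with no fixed component, contradicting the existence of a fixed curve, so $\zeta|_Z$ fixes the three nodes $Z\cap M_i$ and is the identity, exhibiting $Z$ as the fixed rational curve and each node as a point of type $\frac16(0,1)$ with transverse multiplier $\xi_6$. Propagating outward with Lemma \ref{lem:tree} and using reciprocity of the two multipliers on each $\IP^1$, the node $M_i\cap E_i$ becomes type $\frac16(2,5)$ and the remaining fixed point on the end curve $E_i$—a smooth point of $F$, hence carrying the base eigenvalue $-1$—becomes type $\frac16(3,4)$; this matches the periodic type-sequence of the remark after Lemma \ref{lem:tree}. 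Over the three arms this yields three points of type $\frac16(2,5)$, three of type $\frac16(3,4)$, and the single fixed curve $Z$. I would conclude by cross-checking all tallies against the holomorphic Lefschetz formula \ref{eq:lhf} and the inequalities $p_{\frac13(2,2)}\ge p_{\frac16(2,5)}$ and $l^{[2]}\ge l^{[1]}$ of Lemma \ref{lem:623}.
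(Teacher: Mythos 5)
Your overall strategy --- the Weierstrass normal form, the functional equation forcing even vanishing order of $p_{12}$ at the fixed points of $\psi$, and a propagation of local eigenvalues anchored at the section --- is the same as the paper's, and your treatment of the fiber types, of the $I_0$ and $IV$ fibers, and of the final containment statement is correct and in fact more detailed than the paper's own proof (one small slip: a root of order $6$ would make the Weierstrass model non-minimal, so only the orders $0,2,4$ occur, matching your three types).

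The genuine gap is in the $IV^*$ case, at its very first step. You claim that $\zeta$ stabilizes each arm $Z-M_i-E_i$ by ruling out a cyclic permutation of the arms, and your argument for that appeals to ``the existence of a fixed curve'' --- but at that point no $\zeta$-fixed curve in $F$ has been exhibited; that $Z$ is pointwise fixed is precisely what you are trying to prove, so the argument is circular (note that in the $I_0$ and $IV$ cases the preserved fiber contains no fixed curve at all, so there is no a priori reason one must exist here). Moreover, even granting that cyclic permutations are impossible --- which follows for free, since the arm meeting the zero section contains the fixed point $P$ and hence is stabilized --- you never exclude the remaining possibility that $\zeta$ stabilizes that arm and \emph{transposes} the other two; in that scenario $\zeta|_Z$ fixes one node and swaps the other two, and your conclusion ``$\zeta|_Z$ fixes the three nodes $Z\cap M_i$ and is the identity'' simply does not follow. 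The repair is exactly the paper's route and uses only tools you already invoke: start Lemma \ref{lem:tree} at $P$, which is of type $\frac16 (3,4)$ with eigenvalue $-1$ in the base direction and hence $\xi_6^4$ along $E_1$; walking inward, the node $E_1\cap M_1$ has type $\frac16 (2,5)$, and at the node $M_1\cap Z$ the eigenvalue along $Z$ comes out equal to $1$. Since a finite-order automorphism of $\IP^1$ whose derivative at a fixed point equals $1$ must be the identity, $\zeta|_Z=\id$; only now do you know that $Z$ is the fixed rational curve and that all three arms are stabilized, after which your outward propagation finishes the count exactly as you wrote it.
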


\begin{proof}
Without loss of generality, we can assume that $\psi$ is of the form $[x_0:x_1]\mapsto [-x_0:x_1]$, or $t\mapsto -t$. Since, the Weierstrass equation $y^2=x^3+p_{12}(t)$ is invariant under $\psi$, this implies that that the roots of $p_{12}$ are double at $0$ and $\infty$. The fibers which correspond to double roots are those of type $I_0$, $IV$ and $IV^*$. Alternatively, one can perform a local analysis on the fibers, and see that these are the only possibilities. This analysis will also give us the exact nature of the fixed locus on each fiber.
Take a fixed point $P$ at the intersection of the section of $\pi$ and a fiber $F$. Since $\psi$ is an involution, the eigenvalue corresponding to the direction of the section is $-1$. Using Lemma \ref{lem:tree} we can describe the local action in each case:
\begin{enumerate}
 \item $(I_0)$ There are $3$ fixed points of type $\frac16 (3,4)$.
 \item $(IV)$ There is $1$ fixed point of type $\frac16 (3,4)$, and one of type $\frac16 (2,5)$. 
 \item $(IV^*)$ There are $3$ points of type $\frac16 (3,4)$, $3$ points of type $\frac16 (2,5)$ and $1$ rational curve.
\end{enumerate}

\end{proof}

\subsection{Induced action on the base is trivial}
\label{subsec:triv}

\begin{lemma}
Let $X$ be $\zeta$-elliptic, $\psi$ trivial, then $X^{[2]}$ determines $X^{[1]}$. The possibilities are listed in Table \ref{tab:results}.
\end{lemma}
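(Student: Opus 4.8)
The plan is to show that, once $\psi$ is trivial, the invariants $(n,k,g)$ of $X^{[2]}$ force a unique distribution of the fixed data of $\zeta$, and that this distribution is exactly what appears in Table \ref{tab:results}. The starting point is Proposition \ref{prop:AS42}: given $(n,k)$, the fibration $\pi$ has a completely determined collection of singular fibers among $II$, $IV$, $II^*$ and $IV^*$. Since $\psi$ is trivial, every fiber is $\zeta$-stable, so Lemma \ref{lem:loc} applies fiberwise and tells us exactly how many isolated points of each type, and how many fixed rational curves, each singular fiber contributes. Summing the contributions of Lemma \ref{lem:loc} over the fiber data produced by Proposition \ref{prop:AS42} yields candidate values of $p_{\frac16(3,4)}$, $p_{\frac16(2,5)}$ and $l^{[1]}$.

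First I would set up the bookkeeping explicitly. By Lemma \ref{lem:loc} a type $II$ fiber contributes one point of type $\frac16(3,4)$; a type $IV$ fiber contributes one point of type $\frac16(2,5)$; a type $IV^*$ fiber contributes two points of type $\frac16(3,4)$, one point of type $\frac16(2,5)$; and a type $II^*$ fiber contributes three points of type $\frac16(3,4)$, four points of type $\frac16(2,5)$ and one fixed rational curve. I must also account for the global fixed section guaranteed by the last sentence of Lemma \ref{lem:loc}: it is the unique component of $X^{[1]}$ not contained in a fiber, so it contributes either to $l^{[1]}$ (being itself a fixed rational curve) or to points where it meets the fixed fibral configurations, and I would track which. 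Running through the five cases $k=2,\ldots,6$ of Proposition \ref{prop:AS42}, substituting $ii$, $iv$, $ii^*$, $iv^*$ in terms of $n$, gives closed-form expressions for the triple $\left(p_{\frac16(3,4)},\,p_{\frac16(2,5)},\,l^{[1]}\right)$ purely in terms of $(n,k)$.

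The consistency check is the holomorphic Lefschetz relation, equation \ref{eq:lhf}, namely $p_{\frac16 (3,4)} + 2p_{\frac16 (2,5)} - 6l^{[1]} + 6g_M =12$; since $X$ is $\zeta$-elliptic, Lemma \ref{lem:msection} forces $g_M=1$, so the relation reads $p_{\frac16(3,4)}+2p_{\frac16(2,5)}-6l^{[1]}=6$. I would verify that the tallies coming from the fiber-by-fiber count satisfy this for every admissible $(n,k)$; this is both a sanity check and, in the borderline rows, the tool that pins down whether the fixed section adds to $l^{[1]}$ or is absorbed into a $II^*$ configuration. The point is that $X^{[2]}$, via its invariants $(n,k,g)$ and Proposition \ref{prop:AS42}, fixes the entire singular-fiber picture, and Lemma \ref{lem:loc} then reads off $X^{[1]}$ from that picture with no remaining freedom; tabulating the outcomes reproduces the rows of Table \ref{tab:results}.

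The main obstacle I anticipate is the careful treatment of the fixed section and its interaction with the special fibers of type $II^*$ and $IV^*$. Lemma \ref{lem:loc} already folds the $II^*$ fiber's fixed rational curve into its point count, but whether the global section is disjoint from these fibral curves, or meets them at one of the listed fixed points, determines the exact value of $l^{[1]}$ and whether some tallied points coincide. Disentangling this overlap so that no fixed point or curve is counted twice, and checking it against equation \ref{eq:lhf} in each of the cases $k=4,5,6$ where a $II^*$ fiber is present, is the delicate part; the remaining cases $k=2,3$ are a routine substitution.
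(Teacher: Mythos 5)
Your proposal is correct and takes essentially the same route as the paper's proof: Proposition \ref{prop:AS42} determines the singular-fiber configuration from $(n,k)$, and Lemma \ref{lem:loc} gives the unique action of $\zeta$ on each such fiber together with the fixed section, so $X^{[2]}$ determines $X^{[1]}$; your explicit tally and Lefschetz consistency check are just a fleshed-out version of this two-step argument. The overlap you worry about does not occur, since a section meets each fiber in a single point on a multiplicity-one component, hence is disjoint from the weight-six fixed curve of a $II^*$ fiber and meets the other fibral components only in points of type $\frac16(1,0)$, which are never counted among the isolated fixed points.
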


\begin{proof}
From Proposition \ref{prop:AS42}, we know that $(n,k)$ determines the types of fibers of $\pi$. Since, Lemma \ref{lem:loc} tells us that the action of $\zeta$ on each fiber is unique, it follows that $X^{[1]}$ is completely determined by $X^{[2]}$.
\end{proof}

\begin{remark}
Unfortunately, the converse is not true: the simple combinatorial data describing $X^{[1]}$ does not determine uniquely $X$ or $X[2]$. See examples $4$ and $10$ in Table \ref{tab:results}.
\end{remark}

Finally, we will show the existence of all the examples in Table \ref{tab:results}.

\begin{lemma}
Let $X$ be a K3 surface and $\tau$ a non-symplectic automorphism of order $3$ which is $\tau$-elliptic. Then there exists a primitive non-symplectic automorphism $\zeta$ of $X$ such that $\tau=\zeta^2$ and $X$ is $\zeta$-elliptic.
\end{lemma}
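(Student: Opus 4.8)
The plan is to start from the explicit Weierstrass model that $\tau$-ellipticity provides. By Theorem \ref{thm:asell}, since $\tau$ is a non-symplectic automorphism of order $3$ whose fixed locus contains at least two curves (which is exactly what being $\tau$-elliptic encodes, via Corollary \ref{cor:ell}), the surface $X$ is isomorphic to the elliptic K3 surface $y^2=x^3+p_{12}(t)$ with $\tau$ acting as $(x,y,t)\mapsto(\zeta_3^2 x,y,t)$. My goal is to exhibit an order $6$ automorphism $\zeta$ of this same surface that squares to $\tau$; the natural candidate is the one that combines the order $3$ action in the $x$-fiber coordinate with the hyperelliptic involution $y\mapsto -y$ in the $y$-coordinate.

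Concretely, I would define
\begin{equation}
\label{eq:zetadef}
\zeta:(x,y,t)\mapsto(\zeta_3^2 x,-y,t).
\end{equation}
First I would check that \eqref{eq:zetadef} genuinely defines an automorphism of $X$: the Weierstrass equation $y^2=x^3+p_{12}(t)$ is preserved because $(\zeta_3^2 x)^3=x^3$ and $(-y)^2=y^2$, so the transformation maps $X$ to itself, and it is clearly invertible. Next I would verify the order: $\zeta^2:(x,y,t)\mapsto(\zeta_3^4 x,y,t)=(\zeta_3 x,y,t)$, and $\zeta^3:(x,y,t)\mapsto(x,-y,t)$, so $\zeta$ has order $6$ while $\zeta^2$ has order $3$. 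A small point to watch here is that $\zeta^2$ acts as $(\zeta_3 x,y,t)$ rather than $(\zeta_3^2 x,y,t)$; since $\tau$ and $\tau^2$ generate the same cyclic group and have the same fixed locus, this only requires identifying $\tau$ with the appropriate generator (one may instead take $\zeta:(x,y,t)\mapsto(\zeta_3 x,-y,t)$ if one wants $\zeta^2=\tau$ on the nose). I would state this identification explicitly to avoid an off-by-a-power confusion.

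Then I would confirm that $\zeta$ is primitive and non-symplectic by computing its action on the holomorphic $2$-form. Using $\omega=\frac{dx\wedge dt}{y}$ (equivalently $\frac{dx\wedge dt}{2y}$, the standard Weierstrass form of the holomorphic $2$-form), I get $\zeta^*\omega=\frac{d(\zeta_3^2 x)\wedge dt}{-y}=-\zeta_3^2\,\omega=\xi_6\,\omega$ for a suitable primitive sixth root of unity, so the induced character has order $6$, which is precisely primitivity. Finally, $X$ is $\zeta$-elliptic essentially by construction: the fibration $\pi$ is preserved since $\zeta$ fixes the base coordinate $t$, and $\zeta^2=\tau$ acts on the fibers as the order $3$ Weierstrass automorphism of Corollary \ref{cor:ell}, so the hypotheses defining $\zeta$-ellipticity in Definition \ref{def:zeta} hold with the induced base action $\psi$ trivial.

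The argument is almost entirely a direct verification, so there is no deep obstacle; the only genuinely delicate point is bookkeeping the root-of-unity exponents so that $\zeta^2$ really recovers the given $\tau$ (not merely a power of it) and so that the computed character on $\omega$ has full order $6$ rather than order $3$ or $2$. I would therefore present the root-of-unity computations carefully and, if necessary, absorb the discrepancy by choosing the generator $\zeta_3$ versus $\zeta_3^2$ in \eqref{eq:zetadef}, noting that this is harmless since it does not change either the surface or the group generated.
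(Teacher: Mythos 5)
Your proposal is correct and follows essentially the same route as the paper: invoke Theorem \ref{thm:asell} to put $X$ in Weierstrass form $y^2=x^3+p_{12}(t)$ and then define $\zeta$ by composing the order $3$ fiberwise action with $y\mapsto -y$, which is exactly the paper's one-line construction. Your additional verifications (order, action on the $2$-form, and the choice of generator so that $\zeta^2=\tau$ on the nose rather than $\tau^2$) just make explicit what the paper dismisses as ``easy to see.''
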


\begin{proof}
We know from \ref{thm:asell} that $X$ is of the form $y^2=x^3+p_{12}(t)$ and $\tau$ acts as $(x,y,t)\mapsto(\xi_3 x,y,t)$. 
It is easy to see that $\sigma:(x,y,t)\mapsto(\xi_3 x,-y,t)$ acts on $X$ and has the required properties.
\end{proof}

\subsection{Induced action on the base is an involution}
\label{subsec:nontriv}

\begin{lemma}
The fixed locus $X^{[1]}$ is contained in $2$ fibers of $\pi$. 
\end{lemma}

\begin{proof}
Since $\psi$ is an involution, it has two fixed points on $\IP^1$, say $0$ and $\infty$.  Since all the fibers not above these points are permuted, $X^{[1]}$ is a subset of the fibers $F_0$ and $F_\infty$ ($F_i=\pi^{(-1)}(i)$).
\end{proof}

\begin{lemma}
Let $X$ be a K3 surface and $\tau$ a non-symplectic automorphism of order $3$ which is $\tau$-elliptic. Call $\pi$ the associated fibration. Assume that the multiset $X_\pi$ of singular fibers of $\pi$ can be decomposed $F \sqcup M$ with $F$ a multiset of cardinality $2$ whose elements come from $\{ I_0, IV, IV^* \}$ and where each element of $M$ has even multiplicity. 
Then there exists a pair $(X',\tau')$ consisting of a K3 surface and a non-symplectic automorphism of order $3$ such that $X'$ is $\tau'$-elliptic, $X_\pi = X'_{\pi'}$, and $X^\tau=X'^{\tau'}$. Moreover, $\tau'$ factors as $\tau'=\zeta^2$ where $\zeta$ is a primitive non-symplectic automorphism of order $6$ commuting with $\pi'$.
\end{lemma}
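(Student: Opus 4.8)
The plan is to build $X'$ as an explicit Weierstrass fibration whose branch data is symmetric under a base involution, and to let $\zeta$ be the order-$6$ automorphism assembled from the order-$3$ fiber action and this involution. I would first normalize so that the two points to be occupied by the fibers of $F$ are $t=0$ and $t=\infty$, and take the prospective base involution to be $\psi:t\mapsto -t$, whose fixed points are exactly $0$ and $\infty$. By Theorem \ref{thm:asell} both $X$ and the surface $X'$ to be constructed are Weierstrass fibrations $y^2=x^3+p(t)$ on which the order-$3$ automorphism acts by $(x,y,t)\mapsto(\zeta_3^2 x,y,t)$; for such a fibration the Kodaira type over a point is governed solely by the order of vanishing of $p$ there, the types $I_0,II,IV,IV^*,II^*$ corresponding to orders $0,1,2,4,5$, and the total order of vanishing over $\IP^1$ equals $12$.

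The core of the proof is the construction of $p'$. I would place the two members of $F$ over $t=0$ and $t=\infty$. Because $F\subseteq\{I_0,IV,IV^*\}$ these require orders $0$, $2$, $4$, all even, which is precisely the order of vanishing that an even polynomial (a polynomial in $t^2$) can have at a fixed point of $\psi$. The fibers of $M$ I would place in $\psi$-symmetric pairs $\{c,-c\}$ with $c\neq 0$; since each type in $M$ occurs with even multiplicity these pairs exhaust $M$, and distinct choices of the $c$ keep the fibers disjoint. The resulting $p'$ is then even, $p'(-t)=p'(t)$, and has total vanishing order $12$ because its fiber multiset is $X_\pi$, the configuration of a K3. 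Putting $X'=\{y^2=x^3+p'(t)\}$ and $\tau'(x,y,t)=(\zeta_3^2 x,y,t)$, we obtain a K3 surface with an order-$3$ non-symplectic automorphism for which $X'_{\pi'}=X_\pi$ by construction; since the fixed locus of such an automorphism is governed entirely by the singular fiber configuration (the classification recalled from \cite{AS08}), we also get $X^\tau={X'}^{\tau'}$, and $X'$ is $\tau'$-elliptic via $\pi'$.

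It remains to produce $\zeta$. As $p'$ is even, the map $\zeta(x,y,t)=(\zeta_3 x,y,-t)$ preserves $y^2=x^3+p'(t)$ (using $\zeta_3^3=1$ and $p'(-t)=p'(t)$), sends each fiber to the fiber over $\psi(t)$ and hence commutes with $\pi'$, and satisfies $\zeta^2=\tau'$. On the holomorphic $2$-form $\omega=dx\wedge dt/y$ one computes $\zeta^*\omega=-\zeta_3\,\omega$, and since $-\zeta_3$ is a primitive sixth root of unity, $\zeta$ is a primitive non-symplectic automorphism of order $6$, exactly as required.

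The only genuinely delicate point is the bookkeeping in the second paragraph: one must recognize that the two hypotheses on the fiber multiset are precisely the constraints imposed by symmetrizing $p$. The even-order requirement at the fixed points $0,\infty$ is what forces $F\subseteq\{I_0,IV,IV^*\}$, while the $\psi$-pairing of the remaining fibers is what forces every type of $M$ to occur with even multiplicity. One should also check that no vanishing order is driven up to $6$ (which would destroy minimality of the Weierstrass model), but this cannot happen, since the orders at $0,\infty$ are at most $4$ and the paired types have order at most $5$.
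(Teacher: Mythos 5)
Your proof is correct and takes essentially the same route as the paper: the paper's one-line proof appeals to exactly the fact you develop, namely that for $y^2=x^3+p_{12}(t)$ the fibers of type $I_0$, $IV$, $IV^*$ are precisely those sitting over even-order (double) roots, so that an even choice of $p_{12}$ realizes the prescribed fiber data and the base involution $t\mapsto -t$ supplies the order-$6$ lift. Your write-up merely makes explicit what the paper leaves implicit — the construction of the even polynomial $p'$ and the verification that $\zeta(x,y,t)=(\zeta_3 x,y,-t)$ is a primitive non-symplectic automorphism of order $6$ with $\zeta^2=\tau'$.
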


\begin{proof}
This follows from the local analysis in Lemma \ref{lem:loc2}, or from the fact that the only singular fibers corresponding to double roots of $p_{12}(t)$ are those of $I_0$, $IV$, and $IV^*$. 
\end{proof}

Since the action on $F_0$ and $F_\infty$ is determined by Lemma \ref{lem:loc2}, we simply list all possibilities in Table \ref{tab:results}.


\section{Non-elliptic case}
\label{sec:pts}

In this section, we analyze the $8$ remaining cases of Proposition \ref{prop:nell}. 
Given, $(X,\zeta)$, we start by giving a projective model isomorphic to $(X,\zeta^2=\sigma)$. These models come from \cite[Propositions 4.7, 4.9 and 4.11]{AS08}. \\

We will denote by $F_d$ or $G_d(x_0^{m_0},\ldots,x_n^{m_n})$ a general homogenous polynomial in the $x_i^{m_i}$, of \emph{total} degree $d$. Recall that the numbers between parentheses are 
$\left( p_{\frac16 (3,4)}, p_{\frac16 (2,5)}; n, k, g \right)$. \\

When $(n,k,g)=(2,1,2)$ we show that there is a canonical non-symplectic involution commuting with our original non-symplectic automorphism of order $3$ and, in some special cases, one might be able to find another factorization by a non-symplectic automorphism of order $6$. In the other cases, we will always have to restrict ourselves to a subfamily of the moduli of K3 surfaces with non-symplectic involutions of order $3$. In each other projective case, given a triple $(n,k,g)$, there exists a K3 surface endowed with a non-symplectic automorphism $\zeta$ of order $6$ such that the fixed locus of $\zeta^2$ is of type $(n,k,g)$, but the surface is in general different from the original one. \\

We begin by the showing that the $(2,1,2)$ case always factors.

\begin{enumerate} 
 \item $(6,0;2,1,2)$
	$$X: \left. \begin{array}{rcl} 
	y^2 &=  &F_6(x_0,x_1)+F_3(x_0,x_1)x_2^3+b x_2^6
	\end{array}\right.$$
	$$\sigma:(x_0,x_1,x_2)\mapsto(x_0,x_1,\xi_3 x_2)$$

In this case, $\sigma$ always factors through the primitive non-symplectic automorphism of order $6$ :
	$$\zeta:(x_0,x_1,x_2;y)\mapsto(x_0,x_1,\xi_3^2 x_2;-y)$$

\end{enumerate}

Let us now work on the other cases where a change of surface is necessary. 
The first case is worked out in detail. The other cases are similar to treat. Note that the last configuration is not possible. It is also discussed in detail hereunder.

\begin{enumerate}

 \item $(6,0;0,1,4)$
	$$X:\left\{ \begin{array}{rcl}
	F_2(x_0,x_1,x_2,x_3) &= &0\\
	F_3(x_0,x_1,x_2,x_3) + bx_4^3 &=&0
	         \end{array}\right.$$ 
	$$\sigma:(x_0,x_1,x_2,x_3,x_4)\mapsto (x_0,x_1,x_2,x_3,\xi_3 x_4)$$

The fixed locus $X^{[1]}$ consists of $6$ points of type $\frac16 (3,4)$, which must lie on the only fixed curve in $X^{[2]}$, namely the curve of genus $4$, intersection of $X$ and $\left\{(x_0,\ldots,x_3,0)\right\}$. Note that in general, $\sigma$ does not factor through an automorphism of order $6$. Indeed, all K3 surfaces of genus 3 come as complete intersection in $\IP^3$ but not all admit a 2:1 map to an elliptic curve. To have such a map, we need
extra symmetry: let $F_2$ and $F_3$ be polynomials in $x_0^2$ which admit as involution the change of sign on the first coordinates. This will fix exactly six points. Thus, we can assume take $\zeta$ is of the form:
 
	$$X:\left\{ \begin{array}{rcl}
	F_2(x_0^2,x_1,x_2,x_3) &= &0\\
	F_3 (x_0^2,x_1,x_2,x_3) + bx_4^3 &=&0
	\end{array}\right.$$ 
	$$\zeta:(x_0,x_1,x_2,x_3,x_4)\mapsto (-x_0,x_1,x_2,x_3,\xi_3^2 x_4)$$

\item $(2,2;2,1,2)$
	$$X:\left. \begin{array}{rcl} 
	y^2 &=  &F_6(x_0,x_1)+F_3(x_0,x_1)x_2^3+b x_2^6
	\end{array}\right.$$
	$$\sigma:(x_0,x_1,x_2)\mapsto(x_0,x_1,\xi_3 x_2)$$

	$$X:\left. \begin{array}{rcl} 
	y^2 &=  &F_6(x_0^2,x_1)+F_3(x_0^2,x_1)x_2^3+b x_2^6
	\end{array}\right.$$
	$$\sigma:(x_0,x_1,x_2)\mapsto(-x_0,x_1,\xi_3^2 x_2)$$

 \item $(4,1;1,1,3)$
	$$X:\left. \begin{array}{rcl}
	F_4(x_0,x_1,x_2)+F_1(x_0,x_1,x_2)x_3^3 &= &0
	         \end{array}\right.$$ 
	$$\sigma:(x_0,x_1,x_2,x_3)\mapsto (x_0,x_1,x_2,\xi_3 x_3)$$
	$$X:\left. \begin{array}{rcl}
	F_4(x_0^2,x_1,x_2)+F_1(x_0,x_1,x_2)x_3^3 &= &0
	         \end{array}\right.$$ 
	$$\tau:(x_0,x_1,x_2,x_3)\mapsto (-x_0,x_1,x_2,\xi_3^2 x_3)$$

 \item $(4,1;3,1,1)$
	$$X:\left\{ \begin{array}{rcl}
	x_3F_1(x_0,x_1,x_2)+x_4G_1(x_0,x_1,x_2) &= &0\\
	F_3(x_0,x_1,x_2)+G_3(x_3,x_4) &= &0
	         \end{array}\right.$$
	$$ \sigma:(x_0,x_1,x_2,x_3,x_4)\mapsto (x_0,x_1,x_2,\xi_3 x_3,\xi_3 x_4)$$
	$$X:\left\{ \begin{array}{rcl}
	x_4G_1(x_0,x_1,x_2) &= &0\\
	F_3(x_0^2,x_1,x_2)+G_3(x_3^2,x_4) &= &0
	         \end{array}\right.$$
	$$ \tau:(x_0,x_1,x_2,x_3,x_4)\mapsto (-x_0,x_1,x_2,-\xi_3^2 x_3,\xi_3^2 x_4)$$

 \item $(2,2;4,1,0)$
	$$X:\left. \begin{array}{rcl} 
	F_4(x_0,x_1)+F_3(x_2,x_3)F_1(x_0,x_1) &= &0
	\end{array}\right.$$
	$$\sigma:(x_0,x_1,x_2,x_3)\mapsto(x_0,x_1,\xi_3 x_2, \xi_3 x_3)$$
 	$$X:\left. \begin{array}{rcl} 
	F_4(x_0^2,x_1)+F_3(x_2^2,x_3)F_1(x_1) &= &0
	\end{array}\right.$$
	$$\tau:(x_0,x_1,x_2,x_3)\mapsto(-x_0,x_1,-\xi_3^2 x_2, \xi_3^2 x_3)$$

 \item $(0,3;3,0,\emptyset)$
	$$\left\{ \begin{array}{rcl}
	F_2(x_0,x_1)+bx_2 x_3 + c x_2 x_4 &= &0\\
	f_3(x_0,x_1) + d x_2^3 + G_3(x_3,x_4) + x_2 F_1(x_0,x_1) G_1(x_3,x_4) &= &0
	\end{array}\right.$$
	$$ \sigma:(x_0,x_1,x_2,x_3,x_4)\mapsto (x_0,x_1,\xi_3^2 x_2,\xi_3 x_3,\xi_3 x_4)$$
	$$\left\{ \begin{array}{rcl}
	F_2(x_0^2,x_1)+bx_2 x_3 + c x_2 x_4 &= &0\\
	F_3(x_0^2,x_1) + d x_2^3 + G_3(x_3,x_4) + x_2 G_1(x_1) G_1(x_3,x_4) &= &0
	\end{array}\right.$$
	$$ \tau:(x_0,x_1,x_2,x_3,x_4)\mapsto (-x_0,x_1,\xi_3 x_2,\xi_3^2 x_3,\xi_3^2 x_4)$$

 \item $(0,3;3,1,1)$ is impossible.
	$$X:\left\{ \begin{array}{rcl}
	x_3F_1(x_0,x_1,x_2)+x_4G_1(x_0,x_1,x_2) &= &0\\
	F_3(x_0,x_1,x_2)+G_3(x_3,x_4) &= &0
	         \end{array}\right.$$
	$$ \sigma:(x_0,x_1,x_2,x_3,x_4)\mapsto (x_0,x_1,x_2,\xi_3 x_3,\xi_3 x_4)$$

One way to show that this configuration is impossible is to use the above projective model. However, we will show it using the fact that the elliptic curve in $X^{[2]}$ defines a fibration of $X$. Since the Picard lattice of $X$ is $U(3)\oplus A_2^3$, see \cite[{Table 2}]{AS08}, the fibration has a triple section $\sigma$. The image of $\sigma$ intersects the elliptic curve in $X^{[2]}$ with multiplicity $3$. Since $\zeta$ acts as a fixed point free involution on this elliptic curve, the intersection $\sigma$ and the elliptic curve contains two points; this  contradicts the oddness of their intersection number.

\end{enumerate}


\section{Moduli}
\label{sec:mod}

As a direct consequence to Theorem \ref{thm:main.ext} we have the following corollary:

\begin{corollary}
The moduli space of K3 surfaces admitting a primitive non-symplectic automorphism of order $6$ acting fiberwise on an elliptic fibration agrees with the moduli of K3 surfaces admitting a non-symplectic automorphism of order $3$ acting fiberwise on an elliptic fibration.
\end{corollary}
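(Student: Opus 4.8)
The corollary is the fiberwise specialization of Theorem \ref{thm:main.ext}, so the plan is first to extract the relevant bijection of pairs from that theorem and then to promote it to an identification of moduli spaces. A K3 surface together with an order-$6$ (resp. order-$3$) non-symplectic automorphism \emph{acting fiberwise} on an elliptic fibration $\pi\colon X\to\IP^1$ is exactly a $\zeta$-elliptic (resp. $\zeta^2$-elliptic) surface for which the induced base action $\psi$ is trivial. For such data Corollary \ref{cor:ell} and Theorem \ref{thm:asell} put $X$ in the Weierstrass form $y^2=x^3+p_{12}(t)$, with the order-$3$ automorphism acting by $(x,y,t)\mapsto(\xi_3 x,y,t)$. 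I would then record the two mutually inverse assignments $\zeta\mapsto\zeta^2$ and $\tau\mapsto\bigl[(x,y,t)\mapsto(\xi_3 x,-y,t)\bigr]$, the latter being the lift constructed in Section \ref{subsec:triv}; one checks on $\omega=dx\wedge dt/dy$ that $\zeta^{*}\omega=-\xi_3\,\omega$, and since $-\xi_3$ is a primitive sixth root of unity the lift is indeed primitive of order $6$ and again acts fiberwise.

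Next I would observe that, by the $\zeta$-elliptic clause of Theorem \ref{thm:main.ext}, both assignments fix the underlying surface ($X'=X$), the fibration $\pi$ and its Weierstrass datum $p_{12}(t)$, and the N\'eron--Severi lattice $NS(X)$. Hence a single family of surfaces — the Weierstrass K3 fibrations $y^2=x^3+p_{12}(t)$, taken modulo coordinate changes on the base and rescaling of $p_{12}$ — carries simultaneously a canonical order-$3$ and a canonical order-$6$ fiberwise non-symplectic automorphism, the latter squaring to the former. To conclude at the level of coarse moduli I would identify each moduli space with an arithmetic quotient $\Gamma\backslash\mathcal{D}$, where $\mathcal{D}$ and $\Gamma$ are determined by the transcendental lattice $T=NS(X)^{\perp}$ together with the induced automorphism action. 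The key point is that $\zeta$ and $\tau=\zeta^2$ generate the \emph{same} action on $T$ by the Eisenstein integers $\IZ[\xi_6]=\IZ[\xi_3]$ (these rings coincide), so they cut out the same eigenspace decomposition $T\otimes\IC=T_\lambda\oplus T_{\bar\lambda}$; consequently the period domain $\mathcal{D}$ and the centralizer $\Gamma$ are identical in the two descriptions, and the correspondence $\zeta\leftrightarrow\zeta^2$ is the identity on the quotient.

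The main obstacle is precisely this last step — turning a bijection of objects into an equality of moduli \emph{spaces} and not merely of their point sets. Two issues must be handled. First, the lift $\zeta$ is only determined up to $\zeta\leftrightarrow\zeta^{-1}$ (the two order-$6$ automorphisms of a $j=0$ fiber lying over $\tau$); I would argue this is an inessential finite ambiguity, exchanged by the fiberwise hyperelliptic symmetry and absorbed into $\Gamma$, so that the quotient is unaffected. Second, one must confirm that imposing the order-$6$ structure rigidifies $X$ to exactly the same extent as imposing the order-$3$ one, i.e. that the invariant and transcendental sublattices, and hence the dimension and structure of $\mathcal{D}$, agree on both sides; this follows from $NS(X)=NS(X')$ together with the coincidence $\IZ[\xi_6]=\IZ[\xi_3]$ noted above, which makes the two period problems literally the same.
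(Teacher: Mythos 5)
Your proposal is correct, and its first paragraph is exactly the paper's own argument: the paper presents this corollary as a direct consequence of Theorem \ref{thm:main.ext}, whose $\zeta$-elliptic clause is established (in Section \ref{subsec:triv}) by precisely the lift you write down --- composing the order-$3$ Weierstrass action on $y^2=x^3+p_{12}(t)$ with the hyperelliptic involution $y\mapsto -y$ --- so that the two moduli problems have literally the same underlying family of surfaces, with each fiberwise order-$3$ automorphism and its canonical order-$6$ extension determining one another. Where you diverge is in the second half: the paper stops there and treats the identification of moduli spaces as immediate once the objects coincide, whereas you promote the bijection to an identification of period-domain quotients, using that $\IZ[\xi_6]=\IZ[\xi_3]$ and that $\zeta^*$ restricted to the transcendental lattice is a polynomial in $(\zeta^2)^*$ (indeed $\zeta^*|_{T_X}=-\bigl((\zeta^2)^*|_{T_X}\bigr)^2$ since $\zeta^{*3}=-\id$ there), so the eigenspace decompositions, the domain $\mathcal{D}$, and the centralizer $\Gamma$ agree on both sides. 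That extra layer is sound and supplies a rigor about coarse moduli that the paper leaves implicit; it is the standard lattice-theoretic reason the order-$3$ and order-$6$ period problems coincide. One small repair: with your chosen lift $\tau\mapsto\zeta_\tau=\bigl[(x,y,t)\mapsto(\xi_3 x,-y,t)\bigr]$, the composite $\tau\mapsto\zeta_\tau\mapsto\zeta_\tau^2$ is $\tau\mapsto\tau^{-1}$ rather than the identity, so the two assignments are inverse only up to inversion of the automorphism; taking instead $(x,y,t)\mapsto(\xi_3^2 x,-y,t)$ gives an honest section of squaring, and in any case the ambiguity is absorbed at the level of moduli, as you note.
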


The latter has been analyzed and can be described as follows:

\begin{theorem} \cite[Propositions 5.1 and 5.6]{AS08} \label{thm:mod}
Let $\cM_{n,k}$ be the moduli space of K3 surfaces with an order $3$ non-symplectic automorphism fixing $n$ points and $k$ curves. All $\cM_{n,k}$ are irreducible and the total moduli of K3 surfaces with  an order $3$ non-symplectic automorphism has three irreducible components, namely the closures of $\cM_{0,1}$, $\cM_{0,2}$ and $\cM_{3,0}$. Moreover the $\cM_{n,k}$ with $k>1$ lie in the closure of $\cM_{0,2}$ and the $\cM_{j,1}$ lie in the closure of $\cM_{0,1}$.
\end{theorem}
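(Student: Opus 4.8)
The plan is to reduce the statement to a lattice-theoretic description of the periods, following Nikulin and Dolgachev--Kond\={o} for K3 surfaces carrying an automorphism of finite order. Let $\sigma$ be a non-symplectic automorphism of order $3$ on a K3 surface $X$, normalized so that $\sigma^*\omega_X=\zeta_3\,\omega_X$ for the holomorphic form $\omega_X$. First I would record the eigenspace decomposition of $H^2(X,\IC)$ under $\sigma^*$ and set $S(\sigma)=H^2(X,\IZ)^{\sigma^*}$ for the invariant lattice and $T(\sigma)=S(\sigma)^\perp$ for its orthogonal complement inside the K3 lattice $\Lambda=U^3\oplus E_8^2$. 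Since $\sigma$ is non-symplectic, $\omega_X$ lies in the $\zeta_3$-eigenspace of $T(\sigma)\otimes\IC$, so $\sigma^*$ equips $T(\sigma)$ with the structure of a $\IZ[\zeta_3]$-module and the period domain becomes a complex ball $\mathbb{B}=\{[\omega]\in\IP((T(\sigma)\otimes\IC)_{\zeta_3}) : (\omega,\bar\omega)>0\}$ of dimension $\tfrac12\operatorname{rank}T(\sigma)-1$; note that $(\omega,\omega)=0$ is automatic from the eigenvalue condition.

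The key input is then the global Torelli theorem together with surjectivity of the period map in the equivariant setting: for a fixed isometry class of the pair $(S(\sigma),\sigma^*|_{T(\sigma)})$, the surfaces in $\cM_{n,k}$ are parametrized by the quotient of $\mathbb{B}$, with its discriminant locus removed, by the arithmetic group $\Gamma$ of isometries of $T(\sigma)$ commuting with $\sigma^*$. Since $\mathbb{B}$ is connected and irreducible, so is each such quotient; the only point to verify is that a single lattice class occurs for each value of $(n,k)$. Here Nikulin's theory enters: the fixed-locus data $(n,k,g)$ is determined by the pair $(r,a)=(\operatorname{rank}S(\sigma),\ell_{\IF_3}(S(\sigma)^*/S(\sigma)))$ through the holomorphic and topological Lefschetz formulas, and conversely one shows, via Nikulin's existence and uniqueness criteria for primitive embeddings into $\Lambda$, that $S(\sigma)$ — hence $T(\sigma)$ with its Eisenstein structure — is unique up to isometry for each admissible $(r,a)$. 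Irreducibility of every $\cM_{n,k}$ follows.

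To obtain the three components and the containment relations I would organize the admissible lattices $S(\sigma)$ by the partial order given by primitive embeddings: a degeneration $\cM_{n',k'}\subset\overline{\cM_{n,k}}$ occurs exactly when the invariant lattice of the former admits a primitive embedding extending that of the latter, equivalently when the ball $\mathbb{B}'$ sits inside $\mathbb{B}$ as the periods orthogonal to an extra algebraic class. The generic, maximal-dimensional strata are those whose invariant lattice is minimal, and a direct inspection of the list of admissible $(r,a)$ shows there are exactly three minimal lattices, giving the three components $\overline{\cM_{0,1}}$, $\overline{\cM_{0,2}}$ and $\overline{\cM_{3,0}}$. Tracking which larger lattices contain each minimal one then yields that the $\cM_{n,k}$ with $k>1$ specialize $\cM_{0,2}$ while the $\cM_{j,1}$ specialize $\cM_{0,1}$.

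The main obstacle is the lattice bookkeeping: producing the complete list of invariant lattices $S(\sigma)$ realizable by an order-$3$ non-symplectic automorphism, checking existence and uniqueness of their primitive embeddings into $\Lambda$ through discriminant-form computations, and establishing the full poset of primitive embeddings so as to read off both the three maximal strata and the precise closure relations. Everything else — the ball description of the periods and the passage from connectedness to irreducibility — is formal once the equivariant Torelli theorem and surjectivity of the period map are invoked.
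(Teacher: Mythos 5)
There is nothing in the paper to compare your argument against: Theorem \ref{thm:mod} is not proved in this paper at all --- it is imported verbatim from \cite[Propositions 5.1 and 5.6]{AS08} and used as an input for the order-$6$ classification. Judged on its own merits, your proposal is a faithful reconstruction of the strategy that the cited source (following Dolgachev--Kond\={o} and Nikulin) actually employs: the invariant lattice $S(\sigma)$ is hyperbolic and $3$-elementary, the period of $X$ lies in the $\zeta_3$-eigenspace of $T(\sigma)\otimes\IC$, which carries a Hermitian form of signature $\bigl(1,\tfrac12\operatorname{rank}T(\sigma)-1\bigr)$ and hence yields a complex ball of dimension $10-\tfrac{r}{2}$, and irreducibility together with the closure relations follow from connectedness of the ball, equivariant Torelli/surjectivity, and Nikulin's classification of the admissible lattices and their primitive embeddings. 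One step you pass over too quickly: from uniqueness of $S(\sigma)$ up to isometry you conclude ``hence $T(\sigma)$ with its Eisenstein structure is unique,'' but uniqueness of $T(\sigma)$ as an abstract lattice does not formally give uniqueness of the order-$3$ isometry $\sigma^*|_{T(\sigma)}$ up to conjugacy, and it is the conjugacy class of the pair $\bigl(T(\sigma),\sigma^*|_{T(\sigma)}\bigr)$ that must be unique for $\cM_{n,k}$ to be irreducible; this requires a separate argument (classification of fixed-point-free order-$3$ isometries, equivalently of Eisenstein-Hermitian lattices in the relevant genus), and it is one of the points genuinely settled in \cite{AS08}. With that caveat, your plan is the standard and correct route to the statement.
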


While for automorphisms of order $3$, the fixed locus determines the automorphism, the same is not true for order $6$ non-symplectic automorphism. Neither $X^{[2]}$ nor $X^{[1]}$ completely determine $X$ or the automorphism. 
If we wish to look at the moduli of K3 surfaces together with an automorphism of order $6$, the structure is slightly more intricate.

\begin{definition}
Let $\cM_{n,k}^6$ be the moduli space of K3 surfaces with an order $6$ non-symplectic automorphism $\zeta$ such that $\zeta^2$ fixes $n$ points and $k$ curves.
\end{definition}

While the underlying space of $\cM_{n,k}^6$ is a subvariety of $\cM_{n,k}$, at some points, $\cM_{n,k}^6$ has a stacky structure coming from the extra ways of extending the order $3$ automorphism.

\begin{remark}

\begin{enumerate}
\item $\cM_{n,k}^\mathrm{id}$ are the moduli spaces of $\zeta$-elliptic K3 surfaces with $\psi$ trivial such that $\zeta^2$ fixes $n$ points and $k$ curves.
When $X$ is $\zeta$-elliptic and $\psi$ is trivial, we see from Table \ref{tab:results} that $X$ and $\zeta$ are defined uniquely by $X^{[2]}$. Hence $\cM_{n,k}^\mathrm{id}=\cM_{n,k}$ and by Theorem \ref{thm:mod} each of them is irreducible and in the closure of $\cM_{0,2}^\mathrm{id}$. 

\item $\cM_{n,k}^{\IZ/2\IZ}$ are the moduli spaces of  $\zeta$-elliptic K3 surfaces with $\psi$ an involution, such that $\zeta^2$ fixes $n$ points and $k$ curves. When $\psi$ is an involution, the polynomial $p_{12}$ is even. Hence, we are again dealing with reduced spaces except for the cases (3/3') and (6/6') in Table \ref{tab:results}, which correspond to different choices of $0$ and $\infty$. Note that in the case (6/6'), there is an extra possibility, topologically equivalent to 6', where $\psi$ is the composite of the $\psi$'s associated to 6 and 6'.\\
Since we can deform even polynomials and preserve evenness, each of these moduli lies in the closure of $\cM_{0,2}^{\IZ/2\IZ}$. Note that each space underlying $\cM_{n,k}^{\IZ/2\IZ}$ is isomorphic to a subvariety of $\cM_{n,k}^\mathrm{id}$. 

\item If we denote by $\cM^{m,n}$ the moduli of $\zeta$-elliptic K3 surfaces wih $\psi$ an involution and having $p_{12}$ with a root of order $2m$ at $0$ and $2n$ at $\infty$. When moving the roots, we can pass from one moduli to another. Their relative position is given in Figure \ref{fig:mod1}. In each case, the fixed locus can be inferred from the fixed locus of the original element in $\cM^{0,0}$.
\end{enumerate}

\end{remark}

\begin{figure}[htp]
\centering
\psfrag{A}{$\cM^{0,0}$}
\psfrag{B}{$\cM^{1,0}$}
\psfrag{C}{$\cM^{2,0}$}
\psfrag{D}{$\cM^{1,1}$}
\psfrag{E}{$\cM^{2,1}$}
\includegraphics[height=4cm]{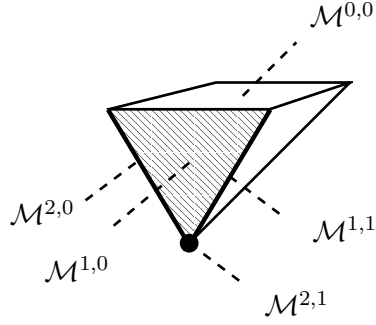}\\
\caption{Relative position of the $\cM^{m,n}$.}\label{fig:mod1}
\end{figure}


\section{Fixed Picard lattice.}
\label{sec:pic}

A special case of the above classification consists in analysing only those automorphisms which fix the Picard group. Although this can be recovered from the previous sections, we will try to analyse the case separately to make the analogy with automorphisms of order $2$ and $3$ as studied by \cite{Ni81} and \cite{AS08}. \\
Recall that for a K3 surface, $X$, the cohomology $H^2(X,\IZ)$ is a unimodular lattice of signature $(3,19)$, i.e. it is isomorphic to $U^3\oplus E_8^2$. Also, it decomposes into the Picard lattice, $S_X$, and the transcendental lattice $T_X$: 
$$H^2(X,\IZ)\cong S_X\oplus T_X.$$
Given a lattice $A$, we will write $A^\perp$ for its orthogonal complement and $A^*$ for its dual $\Hom(A,\IZ)$. We say that a lattice is \emph{p-elementary} when $A^*/A=(\IZ/p\IZ)^k$ for some $k\in \IN$. 

\begin{lemma} 
Let $\zeta$ be a primitive non-symplectic automorphism of order $6$ of $X$ which preserves the Picard lattice, then the Picard lattice $S_X=H^2(X,\IZ)^\zeta$ is a unimodular.
\end{lemma}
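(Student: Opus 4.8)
The plan is to identify the invariant lattice with the Picard lattice, to use the Lefschetz $(1,1)$ theorem to force $\zeta$ to act on the transcendental lattice $T_X=S_X^\perp$ as multiplication by $\xi_6$, and then to exploit that $\xi_6-1$ is a unit of $\IZ[\xi_6]$. Throughout I write $\zeta$ also for the induced isometry of $H^2(X,\IZ)$ and recall that $\zeta^\ast\omega=\xi_6\omega$.

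First I would check that $H^2(X,\IZ)^\zeta=S_X$. For $v\in H^2(X,\IZ)^\zeta$ one has $\langle v,\omega\rangle=\langle \zeta v,\zeta\omega\rangle=\xi_6\langle v,\omega\rangle$, whence $\langle v,\omega\rangle=0$; the same computation against $\overline{\omega}$ (note $\overline{\omega}$ lies in the $\overline{\xi_6}$-eigenspace) gives $\langle v,\overline{\omega}\rangle=0$, so $v$ is of type $(1,1)$ and therefore algebraic, i.e. $v\in S_X$. This yields $H^2(X,\IZ)^\zeta\subseteq S_X$, and the hypothesis that $\zeta$ fixes the Picard lattice provides the reverse inclusion, so $S_X=H^2(X,\IZ)^\zeta$. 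In particular $\zeta$ has no nonzero fixed vector on $T_X$, so its eigenvalues there lie in $\{-1,\xi_3,\overline{\xi_3},\xi_6,\overline{\xi_6}\}$.

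Next I would establish that only $\xi_6,\overline{\xi_6}$ actually occur, which is the crucial step. If some $\mu\in\{-1,\xi_3,\overline{\xi_3}\}$ were an eigenvalue, its eigenspace, summed over Galois conjugates, would be defined over $\IQ$ and meet $H^2(X,\IZ)$ in a nonzero sublattice of $T_X$. For a class $v$ in it the pairing computation gives $\langle v,\omega\rangle=\mu\xi_6\langle v,\omega\rangle$ and $\langle v,\overline{\omega}\rangle=\mu\overline{\xi_6}\langle v,\overline{\omega}\rangle$; since $\mu\xi_6\neq 1\neq\mu\overline{\xi_6}$ for each such $\mu$, we get $v\perp\omega$ and $v\perp\overline{\omega}$, so $v$ is a $(1,1)$-class, hence algebraic, hence $\zeta$-fixed by the previous paragraph — contradicting the eigenvalue $\mu\neq 1$. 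Thus $\zeta$ acts on $T_X$ with eigenvalues only $\xi_6,\overline{\xi_6}$, turning $T_X$ into a module over $\IZ[\xi_6]$ via $\xi_6\mapsto\zeta|_{T_X}$.

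Finally I would conclude with discriminant groups. Because $H^2(X,\IZ)$ is unimodular and $S_X,T_X$ are mutual orthogonal complements, the overlattice is the graph of an anti-isometry $S_X^\ast/S_X\xrightarrow{\sim}T_X^\ast/T_X$ that commutes with $\zeta$; since $\zeta$ fixes $S_X$ pointwise it acts trivially on $S_X^\ast/S_X$, hence trivially on $T_X^\ast/T_X$, i.e. $\zeta-1$ annihilates $T_X^\ast/T_X$. On the other hand $\zeta-1$ is multiplication by $\xi_6-1$, and $(\xi_6-1)(\overline{\xi_6}-1)=2-(\xi_6+\overline{\xi_6})=1$, so $\xi_6-1$ is a unit of $\IZ[\xi_6]$; consequently multiplication by $\xi_6-1$ is an automorphism of the $\IZ[\xi_6]$-module $T_X^\ast$ and therefore of the quotient $T_X^\ast/T_X$. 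A homomorphism that is at once zero and invertible forces $T_X^\ast/T_X=0$, so $T_X$ is unimodular, and via the isomorphism $S_X^\ast/S_X\cong T_X^\ast/T_X$ so is $S_X$. The one place the hypothesis genuinely enters is the purity step of the third paragraph: without $S_X=H^2(X,\IZ)^\zeta$ the eigenvalues $-1,\xi_3,\overline{\xi_3}$ can appear on $T_X$, $T_X$ need not be an Eisenstein module, and the unit argument — hence unimodularity — breaks down.
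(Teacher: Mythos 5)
Your proposal is correct, and it reaches unimodularity by a genuinely different mechanism than the paper. The paper passes to the two prime-order iterates $\zeta_p=(\zeta^*)^{6/p}$, $p\in\{2,3\}$: each acts trivially on the discriminant group $T_X^*/T_X\cong S_X^*/S_X$ (because $\zeta$ fixes $S_X$ pointwise), and each satisfies $1+\zeta_p+\dots+\zeta_p^{p-1}=0$ on $T_X$, whence $pT_X^*\subset T_X$; a discriminant group that is both $2$-elementary and $3$-elementary must vanish. You instead work with $\zeta$ itself: having shown that its eigenvalues on $T_X$ are exactly the primitive sixth roots of unity, you turn $T_X$ and $T_X^*$ into $\IZ[\xi_6]$-modules on which $\zeta-1$ acts as the unit $\xi_6-1$ (its norm is $\Phi_6(1)=1$; concretely $(\zeta-1)(-\zeta)=1$ since $\zeta^2-\zeta+1=0$ on $T_X$), while $\zeta-1$ also annihilates $T_X^*/T_X$; an endomorphism that is simultaneously invertible and zero forces $T_X^*/T_X=0$. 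Both arguments exploit the same special feature of order $6$---that it is not a prime power---packaged either as coprimality of $2$ and $3$ or as the fact that $1-\xi_6$ is a cyclotomic unit (the classical Vorontsov--Kond\={o} trick). A genuine merit of your write-up is that it proves what the paper leaves implicit: the identity $1+\zeta_p+\dots+\zeta_p^{p-1}=0$ on $T_X$ used there is equivalent to $\zeta$ having no eigenvalue in $\{\pm 1,\xi_3,\overline{\xi_3}\}$ on $T_X$, and that is exactly your Galois-conjugate/Lefschetz $(1,1)$ purity step; without it the paper's key identity is unjustified. One small imprecision to fix: the relation $\langle v,\omega\rangle=\mu\xi_6\langle v,\omega\rangle$ holds literally only for eigenvectors, so for a rational class $v$ in the sum of Galois-conjugate eigenspaces you should apply it to each eigencomponent separately; since every conjugate $\mu'$ of $\mu$ still satisfies $\mu'\xi_6\neq 1\neq\mu'\overline{\xi_6}$, the conclusion $v\perp\omega$, $v\perp\overline{\omega}$---and with it your argument---stands.
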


\begin{proof} 
Fix $p\in \{2,3\}$, and let $\zeta_p=(\zeta^*)^{6/p}$. 
The quotients $S_X^*/S_X$ and $\left(S_X^\perp\right)^* / S_X^\perp=T_X^*/T_X$ are isomorphic. 
Hence,  $p=1+\zeta_p^*+...+(\zeta_p^*)^{p-1}=0$ on $T_X$ and $pT_X^* \subset T_X$. Since $S_X$ is both $2$ and $3$ elementary, it is unimodular.
\end{proof}

\begin{corollary} 
The Picard lattice $S_X$ is isomorphic to $U$, $U\oplus E_8$ or $U\oplus E_8^2$.
\end{corollary}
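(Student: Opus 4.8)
The plan is to pin down $S_X$ from three pieces of data — that it is even, that it is unimodular, and that it is hyperbolic of signature $(1,\rho-1)$ — and then to quote the classification of indefinite even unimodular lattices. First I would record that $S_X$ is an \emph{even} lattice: since $H^2(X,\IZ)\cong U^3\oplus E_8^2$ is even, so is every sublattice, in particular the fixed lattice $S_X=H^2(X,\IZ)^\zeta$. The preceding Lemma supplies unimodularity. Finally, because a non-symplectic automorphism forces $X$ to be projective, $S_X$ is the Picard lattice and is hyperbolic of signature $(1,\rho-1)$, where $\rho=\mathrm{rk}\,S_X$ is the Picard number.

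Next I would invoke the structure theorem for indefinite even unimodular lattices (Serre, \emph{A Course in Arithmetic}, Ch.~V; or Milnor--Husemoller): such a lattice is determined up to isometry by its signature, and an even unimodular lattice of signature $(p,q)$ exists precisely when $p\equiv q \pmod 8$. Applied to signature $(1,\rho-1)$ this yields the congruence $1\equiv \rho-1 \pmod 8$, i.e. $\rho\equiv 2\pmod 8$, and simultaneously exhibits the lattice in the explicit form $U\oplus E_8^{(\rho-2)/8}$, where $E_8$ is taken negative definite so that the signatures $(1,1)$, $(1,9)$, $(1,17)$ come out correctly.

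The last step is a rank bound to cut the list down to three entries. Since $\zeta$ is non-symplectic the holomorphic $2$-form is not $\zeta$-invariant, so the transcendental lattice $T_X=S_X^\perp$ is nonzero; in fact $T_X\otimes\IR$ contains the $2$-dimensional positive part spanned by the real and imaginary parts of $\omega_X$, so $\mathrm{rk}\,T_X\geq 2$ and hence $\rho\leq 20$. Combining $\rho\equiv 2\pmod 8$ with $\rho\leq 20$ leaves only $\rho\in\{2,10,18\}$, giving $S_X\cong U$, $U\oplus E_8$, or $U\oplus E_8^2$ respectively.

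The only substantive input is the classification theorem for indefinite even unimodular lattices; everything else is bookkeeping with signatures together with the observation $\mathrm{rk}\,T_X\ge 2$. The point requiring the most care is the signature/definiteness convention for $E_8$ (negative definite here), which is what guarantees that the three candidate lattices actually realize the hyperbolic signatures $(1,1)$, $(1,9)$ and $(1,17)$ rather than the wrong ones.
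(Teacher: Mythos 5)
Your proof is correct and follows essentially the same route as the paper: evenness, unimodularity (from the preceding lemma), signature $(1,\rho-1)$, and then the classification of even unimodular lattices as in Serre. The only differences are cosmetic — you derive evenness from $S_X$ being a sublattice of the even lattice $H^2(X,\IZ)$ where the paper cites adjunction, and you spell out the mod-$8$ congruence and the bound $\mathrm{rk}\,T_X\geq 2$ that the paper leaves implicit.
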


\begin{proof}
By the Hodge index theorem, $S_X$ is of signature $(1,*)$. By adjunction, the lattice is even. Using the classification of even unimodular lattices, e.g. in \cite{Ser70}, we get the desired result. 
\end{proof}

Since in the three cases $S_X$ decomposes as the direct sum of $U$ with a negative definite lattice, it is easy to see that we fall everytime in the elliptic case. Moreover, using the Lefschetz topological formula or the fact that only the Picard lattice is fixed, one can see that there are no other irreducible fibers except for the given $E_8$ fibers generating part of the Picard lattice. Note that using lemma \ref{lem:tree} one can see that in the case of $\mathrm{rk}\; S_X > 2$ only the section and the rational lines of degree $3$ are fixed. 

Recall the following definition, due to Dolgachev \cite{Dol96}, of mirror pairs for K3 surfaces.

\begin{definition}
The K3 surfaces $(M,W)$ form a mirror pair whenever $S_M^\perp=S_W\oplus U$.
\end{definition}

When applied to the case of unimodular Picard lattices, we see that K3 surfaces form a pair when their Picard groups are respectively $U^i\oplus E_8^j$ and $U^{2-i}\oplus E_8^{2-j}$. I.e. the surfaces with Picard groups $U$ and $U\oplus E_8^2$ are dual to one another while the surfaces with Picard group $U\oplus E_8$ are self-dual. 
This confirms the diagrams obtained for automorphisms of order $2$ and $3$ showing that mirror symmetry is a natural transformation preserving symmetries.


\bibliographystyle{halpha}
\bibliography{../sources}

\end{document}